\newtheorem{theorem}{Theorem}[section]
\newtheorem{claim}{}[theorem]
\newtheorem{lemma}[theorem]{Lemma}
\newtheorem{problem}[theorem]{Problem}
\newtheorem{conjecture}[theorem]{Conjecture}
\theoremstyle{definition}
\newcommand{\cP}{\mathcal{P}}
\DeclareMathOperator{\cl}{cl}
\DeclareMathOperator{\GF}{GF}
\DeclareMathOperator{\AG}{AG}
\newcommand{\del}{\!\setminus\!}
\title[Minimally vertically $4$-connected matroids]{Small cocircuits in minimally vertically $4$-connected matroids}
\author{James Oxley}
\address{Mathematics Department, Louisiana State University, Baton Rouge, Louisiana, USA}
\email{oxley@math.lsu.edu}
\author{Zach Walsh}
\address{Mathematics Department, Louisiana State University, Baton Rouge, Louisiana, USA}
\email{walsh@lsu.edu}
\date{\today}
\begin{document}
\maketitle


\begin{abstract} 
Halin proved that every minimally $k$-connected graph has a vertex of degree $k$.
More generally, does every minimally vertically $k$-connected matroid have a $k$-element cocircuit? 
Results of Murty and Wong give an affirmative answer when $k \le 3$.
We show that every minimally vertically $4$-connected matroid with at least six elements has a $4$-element cocircuit, or a $5$-element cocircuit that contains a triangle, with the exception of a specific non-binary $9$-element matroid.
Consequently, every minimally vertically $4$-connected binary matroid with at least six elements has a $4$-element cocircuit.
\end{abstract}


\begin{section}{Introduction}
A graph $G$ is \emph{minimally $k$-connected} if it is $k$-connected, and $G \del e$ is not $k$-connected for all edges $e$ of $G$.
While $k$-connected graphs have no vertices of degree less than $k$, Halin~\cite{Halin} proved that every minimally $k$-connected graph has a vertex of degree exactly $k$.
Since vertices of small degree are useful, for example, for facilitating inductive arguments, Halin's result was strengthened several times until finally Mader~\cite{Mader} proved a tight lower bound on the number of vertices of degree $k$ in a minimally $k$-connected graph.

Although matroids in general do not have vertices, it has been common to use cocircuits as matroid analogues of vertices.
When we seek analogues of Halin's theorem, we find that there are two widely used notions of matroid connectivity, namely (Tutte) connectivity, and vertical connectivity.

The analogue of Halin's result for $k$-connectivity was studied by Reid, Wu, and Zhou~\cite{RWZ}.
A matroid $M$ is \emph{minimally $k$-connected} if it is $k$-connected, and $M \del e$ is not $k$-connected for all elements $e$ of $M$.
Reid et al. sought to prove the following (see~\cite[Problem 14.4.9]{Oxley92}).

\begin{conjecture} \label{k-conn}
If $M$ is a minimally $k$-connected matroid with $|E(M)| \ge 2(k-1)$, then $M$ has a cocircuit of size $k$.
\end{conjecture}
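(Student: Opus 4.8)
The plan is to argue by contradiction in the spirit of Halin's and Mader's proofs for graphs, with cocircuits playing the role of vertex stars. Since the cases $k\le 3$ follow from the structural descriptions of minimally $2$- and $3$-connected matroids due to Murty and Wong, we may assume $k\ge 4$ and, if necessary, induct on $|E(M)|$, handling the smallest cases by a direct check. The first observation is a lower bound on the cogirth: if $M$ is $k$-connected with $|E(M)|\ge 2(k-1)$, then the complement of any cocircuit is a hyperplane, so a cocircuit $C^*$ with $|C^*|\le k-1$ would make $(C^*,E(M)\del C^*)$ a $(k-1)$-separation of $M$; hence every cocircuit of $M$ has at least $k$ elements. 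It therefore suffices to rule out the possibility that every cocircuit of $M$ has at least $k+1$ elements, and we assume this from now on.

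Next I would use minimality. For each element $e$, the matroid $M\del e$ is not $k$-connected, which (using $|E(M)|\ge 2(k-1)$, so that $e$ is not a coloop) forces a $(k-1)$-separation $(X,Y)$ of $M\del e$ with $|X|,|Y|\ge k-1$. Choose $e$, and a labelling of the sides, so that $|X|$ is as small as possible over all elements and all such separations. A short submodularity computation, using that $M$ itself is $k$-connected, shows that $\lambda_M(X)=\lambda_M(X\cup e)=k-1$ and that $e\notin\cl_M(X)\cup\cl_M(Y)$ — the matroid analogue of an edge with an end on each side of a minimum separating set.

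The heart of the argument is a local analysis of the small side $X$. Combining the minimality of $|X|$ with the assumption that every cocircuit has at least $k+1$ elements, one aims to show that every element of $X$ lies in a cocircuit of $M$ contained in $X\cup e$ and, from this, that $|X|$ is bounded below in terms of $k$ and that the cocircuits meeting $X$ are tightly constrained. One then seeks a contradiction by producing either a cocircuit of size $k$ inside $X\cup e$ outright, or a single-element deletion $M\del f$ (with $f$ well inside $X$) having a $(k-1)$-separation whose small side is strictly smaller than $|X|$. When $X\cup e$ is itself small, the relevant structure can be identified directly; otherwise one contracts a suitable subset of $Y$ and applies the inductive hypothesis to the resulting smaller minor, after handling any elements that have become non-essential.

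The step I expect to be the real obstacle is precisely this local analysis of $X\cup e$: controlling all the ways in which the cocircuits of $M$ that meet $X$ can interlock without either being short or forcing a smaller separating structure. The difficulty grows quickly with $k$, and the main theorem of this paper already shows that even for $k=4$ one cannot hope for a clean conclusion — one must either permit a slightly larger cocircuit containing a triangle or exclude a sporadic $9$-element example — so for general $k$ I would expect the honest outcome to require either substantial additional hypotheses (such as binarity or representability) or genuine exceptional matroids.
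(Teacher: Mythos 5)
There is a fundamental problem here that no amount of refinement of your sketch can overcome: the statement you are trying to prove is a \emph{conjecture} in the paper, not a theorem, and the paper itself reports that it is false. Reid, Wu, and Zhou disproved it for every $k \ge 5$ by constructing a minimally $k$-connected matroid with $2k+1$ elements and no $k$-element cocircuit, and even for $k=4$ the matroid $N_9$ (nine elements, minimally $4$-connected, no $4$-cocircuit) violates the statement as written, since $9 \ge 2(4-1)$. The only cases in which the conjecture is known to hold are $k \le 3$ (Murty, Wong) and $k = 4$ with the single exception of $N_9$. So the correct response to this statement is a counterexample, not a proof. Your closing remarks show you sensed this --- you anticipate needing ``genuine exceptional matroids'' or extra hypotheses --- but that instinct should have led you to doubt the statement itself rather than to outline a proof of it.

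Setting the falsity aside, the proposal would not constitute a proof in any case: the portion you label ``the heart of the argument,'' namely the local analysis of the small side $X \cup e$ showing that the cocircuits meeting $X$ either include one of size $k$ or force a smaller separation, is exactly where all the work lies, and you explicitly defer it as ``the real obstacle.'' The preliminary steps you do carry out (cogirth at least $k$, existence of a $(k-1)$-separation of $M \del e$, choosing a minimal small side) are standard and correct, but they are the easy part; the paper's own contribution for the vertical-connectivity analogue at $k=4$ occupies all of Section 3 and already requires admitting $5$-cocircuits containing triangles and excluding $N_9$. For general $k$ the paper leaves the (suitably corrected) question open, which is strong evidence that the gap you have left is not a routine one.
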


Results of Murty~\cite{Murty} and Wong~\cite{Wong} prove the conjecture when $k \le 3$.
Reid et al.~\cite{RWZ} proved that the conjecture holds when $k = 4$, with a unique exception, the $9$-element rank-$4$ matroid $N_9$ that is represented over $\GF(3)$ by the matrix in Figure~\ref{matrix}.

\begin{theorem}[Reid, Wu, Zhou] \label{RWZ}
Let $M$ be a minimally $4$-connected matroid with at least six elements such that $M \ncong N_9$. 
Then $M$ has a $4$-element cocircuit.
\end{theorem}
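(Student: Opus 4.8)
The plan is to argue by contradiction, establishing the equivalent statement: if $M$ is minimally $4$-connected with $|E(M)| \ge 6$ and $M$ has no $4$-element cocircuit, then $M \cong N_9$. Throughout I would use two standard facts about a $4$-connected matroid $M$ with at least six elements: it has no circuit or cocircuit of size at most $3$ (such a set exhibits a $3$-separation), and $M \setminus e$ is $3$-connected for every $e \in E(M)$ (a standard consequence of $4$-connectivity together with $|E(M)| \ge 6$). Since $M$ is minimally $4$-connected, for each $e$ the matroid $M \setminus e$ is $3$-connected but not $4$-connected, so it has a $3$-separation $(X, Y)$ with $\min(|X|,|Y|) \ge 3$ and $\lambda_{M \setminus e}(X) = 2$.

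First I would record the local behaviour of such a separation. Applying $4$-connectivity of $M$ to the partitions $(X, Y \cup e)$ and $(Y, X \cup e)$ forces $\lambda_M(X) = \lambda_M(Y) = 3$, and comparing the rank functions of $M$ and $M \setminus e$ then yields $e \notin \cl_M(X) \cup \cl_M(Y)$. If $|X| = 3$, then $X$ is independent, $X \cup e$ is independent of rank four, the identity $\lambda_M(Y) = 3$ forces $r_M(Y) = r(M) - 1$, and $Y$ is closed (otherwise an element of $X \cup e$ lying in $\cl_M(Y)$ would expose a $2$-separation of the $3$-connected matroid $M \setminus e$); hence $Y$ is a hyperplane and $X \cup e$ is a $4$-element cocircuit, contrary to hypothesis. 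So every $3$-separation of every $M \setminus e$ has both sides of size at least four, and in particular $|E(M)| \ge 9$. Re-running the same bookkeeping when $|X| = 4$ rules out $r_M(X) = 4$ (that would make $X$ itself a $4$-cocircuit, or else deleting a suitable element of $X$ from the separation would produce a $3$-separation of $M$), so in that case $X$ is a $4$-element circuit and $X \cup e$ is a $5$-element cocircuit.

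The core of the argument is then to choose, over all $e$ and all $3$-separations $(X, Y)$ of $M \setminus e$, one with $|X|$ as small as possible, and to prove both that this minimum equals four and that this forces $M \cong N_9$. For the first part one studies $X$ as an inclusion-extremal $3$-separating configuration: for each $f \in X$ one compares the partition $(X \setminus f,\, Y \cup f)$ of $E(M \setminus e)$ with a $3$-separation of $M \setminus f$ and uncrosses them in $M$; minimality of $|X|$ then forces $M|X$ to have no coloops and every element of $X$ to lie outside $\cl_M(Y)$, and these constraints — together with the fact that $E(M) \setminus e$ is spanning — should be shown to be incompatible with $|X| \ge 5$. Once $|X| = 4$, the $4$-circuit/$5$-cocircuit structure holds for every single-element deletion, and the way these configurations for different deletions must overlap should pin down $M$ to rank four, nine elements, with every single-element deletion non-$4$-connected and no five-point plane; a finite, most likely computer-assisted, check then verifies that the unique such matroid is $N_9$, represented over $\GF(3)$ by the matrix in Figure~\ref{matrix}. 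Since $M \not\cong N_9$, this is the desired contradiction.

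The routine ingredient is the local observation that a $3$-element side of a $3$-separation of a deletion produces a $4$-cocircuit — this is just submodularity plus $4$-connectivity. The main obstacle is the extremal analysis: the separations being uncrossed come from deletions of \emph{different} elements, so one must carefully track the few "extra" elements that lie in one ambient matroid but not the other; and eliminating every minimum side of size five or more — and then rigidly reconstructing the unique exceptional matroid $N_9$ from the resulting highly constrained $9$-element, rank-$4$ configuration — is where essentially all of the difficulty of the theorem resides.
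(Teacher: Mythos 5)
This statement is not proved in the paper at all: it is Theorem~\ref{RWZ}, quoted verbatim from Reid, Wu, and Zhou~\cite{RWZ}, so there is no internal proof to compare your attempt against. Judged on its own terms, your proposal is a strategy outline rather than a proof, and the gap is substantial. The one step you actually carry out --- that a $3$-element side $X$ of a $3$-separation of $M \setminus e$ forces $X \cup e$ to be a $4$-cocircuit --- is correct and routine: $X$ is independent since $M$ has no small circuits, $r(Y) = r(M) - 1$, no element of $X \cup e$ lies in $\cl_M(Y)$ (else $M$ would have a $3$-separation or a small cocircuit), so $Y$ is a hyperplane and $X \cup e$ is its complementary cocircuit.

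Everything after that is deferred. You choose a minimum-size $3$-separating side over all deletions and assert that uncrossing it against separations of $M \setminus f$ for $f \in X$ ``should be shown to be incompatible with $|X| \ge 5$,'' and that the $|X| = 4$ case ``should pin down'' a $9$-element rank-$4$ configuration, with $N_9$ recovered by ``a finite, most likely computer-assisted, check.'' These are precisely the two places where, as you yourself note, essentially all of the difficulty resides, and neither is executed: the uncrossing argument across different ambient matroids $M \setminus e$ and $M \setminus f$ requires careful control of the elements $e, f$ that lie in one ground set but not the other, and the uniqueness of $N_9$ is a genuinely nontrivial reconstruction (Reid, Wu, and Zhou build it from a $2$-$(9,4,3)$ design). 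As it stands the proposal identifies a plausible skeleton but does not constitute a proof; if you need this result, cite~\cite{RWZ} as the paper does.
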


\begin{figure}
\begin{center}
		$\begin{bmatrix}
1 & 0 & 0 & 0 & 0 & 1 & 1 & -1 & 1 \\
0 & 1 & 0 & 0 & 1 & 0 & 1 & 1  & -1 \\
0 & 0 & 1 & 0 & 1 & 1 & 0 & 1  & 1 \\
0 & 0 & 0 & 1 & -1& 1 & 1 & 0  & -1
\end{bmatrix}.$
	\end{center} 
	
	\caption{A ternary representation for $N_9$.}
	\label{matrix}
	\end{figure}

Reid et al. constructed $N_9$ from a $2$-$(9,4,3)$-design, so it is highly structured.
It has a transitive automorphism group, and each element $x$ satisfies $N_9 \del x \cong P_8$ and $N_9/x \cong \AG(2,3)\del e$.
Since $\AG(2,3)\del e$ is not binary, $N_9$ is not binary.

In the same paper, Reid et al. disprove Conjecture \ref{k-conn} for each $k \ge 5$, by finding a minimally $k$-connected matroid with $2k + 1$ elements and no $k$-cocircuits.
They conjecture that Conjecture \ref{k-conn} holds when $|E(M)| \ne 2k + 1$, so the analogue of Halin's theorem for $k$-connectivity may still hold for $k \ge 5$.


In this paper, we focus on the analogue of Halin's theorem for vertical $k$-connectivity.
This is a weaker connectivity property than $k$-connectivity, in the sense that every $k$-connected matroid is also vertically $k$-connected.
A matroid $M$ is \emph{minimally vertically $k$-connected} if it is vertically $k$-connected, and $M \del e$ is not vertically $k$-connected for all elements $e$ of $M$.
Thus Halin's theorem prompts the following natural question for vertical $k$-connectivity.

\begin{problem} \label{k}
Does every minimally vertically $k$-connected matroid with at least $2k + 2$ elements have a $k$-cocircuit?
\end{problem}

Since a graph $G$ is $k$-connected if and only if the graphic matroid $M(G)$ is vertically $k$-connected, this problem seeks a direct generalization of Halin's result.
The condition that the matroid has at least $2k + 2$ elements is necessary, due to the construction of Reid et al.~\cite{RWZ}.

When $k \le 3$, minimal vertical $k$-connectivity and minimal $k$-connectivity coincide, so the results of Murty~\cite{Murty} and Wong~\cite{Wong} affirmatively answer Problem \ref{k} when $k \le 3$.
However, for $k \ge 4$, minimal vertical $k$-connectivity is a strictly weaker condition than $k$-connectivity.
We take a step towards resolving Problem \ref{k} for $k = 4$ by showing that every minimally vertically $4$-connected matroid, except for the $9$-element matroid in Theorem~\ref{RWZ}, has a small cocircuit with special structure.

\begin{theorem} \label{minimal}
Let $M$ be a minimally vertically $4$-connected matroid with at least six elements such that $M \ncong N_9$.
Then $M$ has a $4$-cocircuit, or a $5$-cocircuit that contains a triangle.
\end{theorem}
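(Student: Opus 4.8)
The plan is to split according to whether $M$ is $4$-connected in the sense of Tutte. First I would reduce to the case that $M$ is simple. Since the vertical connectivity of a matroid equals that of its simplification, if $M$ had a loop or a pair of parallel elements, then deleting one such element $e$ would leave $\si(M\del e)\cong\si(M)$, so $M\del e$ would be vertically $4$-connected, contradicting minimality; hence $M$ is simple. Now suppose $M$ is $4$-connected. For each element $e$, the hypothesis that $M\del e$ is not vertically $4$-connected forces, since $4$-connectivity implies vertical $4$-connectivity, that $M\del e$ is not $4$-connected; thus $M$ is in fact minimally $4$-connected, and Theorem~\ref{RWZ} (together with $M\ncong N_9$) yields a $4$-cocircuit. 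So from now on $M$ is simple but not $4$-connected. A simple vertically $4$-connected matroid is $3$-connected, so $M$ has an exact $3$-separation $(A,B)$; as this is not a vertical $3$-separation, one side has rank at most $2$, and simplicity together with $|A|\ge 3$ makes that side a set of at least three collinear points. Hence $M$ has a \emph{long line}: a rank-$2$ flat $L$ with $|L|\ge 3$; moreover every exact $3$-separation of $M$ has a side contained in a long line.

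Now suppose, for a contradiction, that $M$ has neither a $4$-cocircuit nor a $5$-cocircuit containing a triangle. Since $M$ is simple, $3$-connected, and (assuming, as we may, that $r(M)\ge 4$; the case $r(M)=3$ is handled separately) of rank at least $4$, it has no coloop, series pair, or triad, so with the assumption it has cogirth at least $5$. I would also record the dichotomy that, for any cocircuit $C^*$, the complementary hyperplane meets a long line $L$ in a flat of $L$, so that $C^*\cap L=\varnothing$ or $|C^*\cap L|\ge|L|-1$; in particular, if $|L|\ge4$ then every cocircuit meeting $L$ contains a triangle, and a $5$-cocircuit meeting $L$ would finish the proof — which is precisely why the second alternative occurs in the statement. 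By minimality, for each $e$ there is a vertical separation $(X_e,Y_e)$ of $M\del e$ of order at most $3$. Using the correspondence between hyperplanes of $M$ and of $M\del e$, if the smaller side of some $(X_e,Y_e)$ has at most three elements then $M\del e$, and hence $M$, has a cocircuit of size at most $4$, a contradiction. This already disposes of every $M$ with $|E(M)|\le8$, so I may assume $|E(M)|\ge9$ and then choose $e$ and $(X_e,Y_e)$ making the smaller side as small as possible; by the above its size is at least $4$.

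The substance of the proof, and the step I expect to be the main obstacle, is the structural analysis of this extremal configuration. One must show that the minimality of the smaller side, cogirth at least $5$, the existence of a long line $L$, and the fact that adding $e$ back destroys the separation, together force a $4$-cocircuit of $M$, or a $5$-cocircuit through $L$, or an outright contradiction. I would separate two regimes. If every long line of $M$ has exactly three elements, then $M$ is very nearly $4$-connected and the analysis should mirror, and invoke, the Reid--Wu--Zhou argument for Theorem~\ref{RWZ}: one aims to show the extremal side is a short, fan-like configuration sitting across the separation from $e$, producing a $4$-cocircuit of $M$. If instead some long line $L$ has at least four elements, the dichotomy $|C^*\cap L|\in\{0\}\cup\{|L|-1,|L|\}$ is so restrictive that it bounds the cocircuits through $L$, forces $|L|$ and $|E(M)\setminus L|$ to be small, and reduces the problem to finitely many explicit matroids, which are exactly the ones furnishing a $5$-cocircuit containing a triangle in place of a $4$-cocircuit. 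The genuinely delicate points are that $e$ may lie on a long line, that $M$ may have several interacting long lines, and that $M\del e$ need not be simple (a triangle of $M$ through $e$ becomes a parallel pair), all of which complicate the step of improving a vertical separation of $M\del e$ to one of order exactly $3$ and the bookkeeping of the extremal minimisation.

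Finally I would clean up the boundary: the cases $r(M)\in\{3,4\}$, where the rank inequalities used above can degenerate (for instance, $M$ may have a triad when $r(M)=3$), must be treated by hand, and one verifies that the only matroid surviving the entire case analysis is $N_9$, arising in the $4$-connected branch. The stated binary corollary is then immediate, because in a binary matroid a cocircuit and a circuit meet in an even number of elements, so no cocircuit of a binary matroid can contain a triangle, and the second alternative of the theorem becomes vacuous.
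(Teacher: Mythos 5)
Your reductions are sound and match the paper's setup: $M$ is simple; if $M$ is $4$-connected then it is minimally $4$-connected and Theorem~\ref{RWZ} finishes; otherwise $M$ has a triangle (equivalently, a long line), and one may assume cogirth at least $5$. But the heart of the theorem is exactly the step you defer with ``one must show \ldots'' and ``the analysis should mirror, and invoke, the Reid--Wu--Zhou argument,'' and nothing in your extremal set-up (choosing $e$ and a vertical $3$-separation of $M\del e$ with smallest small side) actually delivers the conclusion. The paper's engine is quite different: it is a Tutte-Triangle-Lemma-style uncrossing. For a triangle $\{a,b,c\}$ with $M\del a$ and $M\del b$ both not vertically $4$-connected, one takes exact vertical $3$-separations $(X_a,Y_a)$ and $(X_b,Y_b)$ with $b\in X_a$, $a\in X_b$, and applies submodularity of $\lambda_{M\del a,b}$ to the four intersection cells (Lemma~\ref{technical}); doing this for all three pairs from the triangle (Theorem~\ref{main extra}) yields either a cocircuit $C^*$ with $|C^*\cap T|=2$ and $M|C^*\cong U_{2,k}\oplus U_{2,2}$, or $r(M)\le 6$ and $|E(M)|\le 11$. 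Since a minimally vertically $4$-connected matroid has no $U_{2,4}$-restriction (Lemma~\ref{4-pt line}), $k\le 3$ and the structured cocircuit has at most five elements and contains a triangle. The residual case $9\le|E(M)|\le 11$ is then closed by a separate counting argument (pairwise-disjoint pairs $P_a$ spanning a plane with a triangle), not by citing RWZ. None of this appears in your proposal, so the central implication is unproved.

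A second, more localized error: you locate the source of the ``$5$-cocircuit containing a triangle'' outcome in long lines with at least four points, via the dichotomy $|C^*\cap L|\in\{0\}\cup\{|L|-1,|L|\}$. That dichotomy is correct, but the regime is vacuous here: by Lemma~\ref{4-pt line}, a minimally vertically $4$-connected matroid has no $U_{2,4}$-restriction, so every line has exactly three points. The second alternative in the theorem really comes from the $U_{2,3}\oplus U_{2,2}$ cocircuit produced by the uncrossing argument, i.e., a triangle plus a two-element set meeting the given triangle in two elements. So your proposed case split would collapse to the three-point-line regime, for which you have no argument beyond an appeal to RWZ that the paper does not (and cannot directly) make.
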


Since no binary matroid has a $5$-cocircuit that contains a triangle, Theorem~\ref{minimal} shows that Problem~\ref{k} has an affirmative answer for binary matroids when $k = 4$.

\begin{theorem} \label{binary}
Every minimally vertically $4$-connected binary matroid with at least six elements has a $4$-cocircuit.
\end{theorem}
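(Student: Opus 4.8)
The plan is to derive Theorem~\ref{binary} as an immediate consequence of Theorem~\ref{minimal}. First I would dispose of the exceptional matroid: as recorded in the discussion following Theorem~\ref{RWZ}, $N_9/x \cong \AG(2,3)\del e$, which is non-binary, so $N_9$ itself is non-binary. Hence if $M$ is a minimally vertically $4$-connected binary matroid with $|E(M)| \ge 6$, then $M \ncong N_9$, and Theorem~\ref{minimal} applies to give that $M$ has either a $4$-cocircuit or a $5$-cocircuit that contains a triangle.

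The second step is to rule out the latter possibility when $M$ is binary. Here I would invoke the standard orthogonality property of binary matroids: in a binary matroid every circuit and every cocircuit meet in an even number of elements (see, e.g.,~\cite{Oxley92}). A triangle $T$ is a circuit of size $3$, so if $C^*$ were a $5$-cocircuit with $T \subseteq C^*$, then $|T \cap C^*| = 3$ would be odd, a contradiction. Therefore no binary matroid has a $5$-cocircuit containing a triangle, and the only surviving outcome of Theorem~\ref{minimal} is that $M$ has a $4$-cocircuit, which proves the theorem.

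There is essentially no obstacle to overcome here — the entire substance lies in Theorem~\ref{minimal}. The only points needing care are the two elementary facts just used: that $N_9$ is non-binary (already available in the text), and that binary matroids satisfy circuit--cocircuit orthogonality modulo $2$. Assembling these gives Theorem~\ref{binary} in a few lines.
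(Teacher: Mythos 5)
Your proposal is correct and matches the paper's own argument: the paper derives Theorem~\ref{binary} directly from Theorem~\ref{minimal} using the fact that a circuit and a cocircuit in a binary matroid meet in an even number of elements, with the non-binariness of $N_9$ (noted in the introduction) excluding the exceptional case. Nothing further is needed.
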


Despite these positive results, we conjecture that Problem \ref{k} has a negative answer when $k  = 4$ in the following strong sense.

\begin{conjecture} \label{construction}
There is an infinite family of minimally vertically $4$-connected matroids with no $4$-cocircuits.
\end{conjecture}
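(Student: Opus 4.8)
The plan is to pass to the dual. Writing $N = M^*$, the matroid $M$ is minimally vertically $4$-connected exactly when $N$ is cyclically $4$-connected and $N/e$ is not cyclically $4$-connected for every $e \in E(N)$, and $M$ has no $4$-cocircuit exactly when $N$ has no $4$-circuit; moreover, when $N$ is not tiny, cyclic $4$-connectivity already forbids circuits of size at most $3$. So the goal is equivalently to construct an infinite family of cyclically $4$-connected matroids of girth at least $5$ that lose cyclic $4$-connectivity under the contraction of any single element.

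The first step is to record a convenient certificate for $N/e$ failing to be cyclically $4$-connected. Suppose $e$ lies in a $5$-circuit $C$ of $N$ such that $C \setminus \{e\}$ contains a triad $D^*$ of $N$ (necessarily $e \notin D^*$). Then $A := C \setminus \{e\}$ is a $4$-circuit of $N/e$, the complementary set $B := E(N) \setminus C$ is dependent in $N/e$, and, since $A$ contains the cocircuit $D^*$ (so that $B \cup \{e\} = E(N) \setminus A$ lies in a hyperplane of $N$), we get $r_{N/e}(B) \le r(N) - 2$ and hence $\lambda_{N/e}(A) = 3 + r_{N/e}(B) - (r(N) - 1) \le 2$; thus $(A, B)$ witnesses that $N/e$ is not cyclically $4$-connected. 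On the primal side this certificate says precisely that $M$ has a $5$-cocircuit containing a triangle, consistently with Theorem~\ref{minimal}. So it suffices to produce an infinite family of matroids of girth at least $5$, each cyclically $4$-connected, in which every element lies in a $5$-circuit that contains a triad avoiding it.

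To construct such a family I would work recursively. Starting from a small cyclically $4$-connected matroid $N_0$ of girth at least $5$ in which every element already carries such a certificate, I would repeatedly apply a surgery that inserts a fresh bounded-size gadget --- essentially a new triad together with a few linking elements --- designed so as to (i) keep the girth at least $5$; (ii) preserve cyclic $4$-connectivity, the linking elements being present precisely to stop a low-order cyclic separation from forming across the new piece (a naive gluing along a triad would create one, so the gadget must be strictly fatter than a triad); (iii) equip the newly inserted elements with their own $5$-circuit-with-triad certificates; and (iv) leave intact the certificates already carried by the old elements. Maintaining these four invariants by induction and translating back through the dual would yield the required family. A second avenue, closer to the origin of the known exception, is to seek a design-theoretic family extending the $2$-$(9,4,3)$-design description of $N_9$: to each member of an infinite family of suitable $2$-designs one would attach a matroid of small bounded rank whose flats are controlled by the design so that every hyperplane has complement of size at least $5$, and then try to read off minimality from the regularity and symmetry of the design.

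The step I expect to be the genuine obstacle --- the reason this is only a conjecture --- is reconciling minimality with the absence of $4$-cocircuits. Minimality forces $M$ to be tight at every element, while having no $4$-cocircuit forbids any hyperplane with small complement, i.e.\ any concentration of points; dually, one needs $N$ spread out enough that no short circuit survives in a contraction, yet still carrying a load-bearing triad through every element. These two demands pull against each other, and most spread-out matroids with no small cocircuit turn out not to be minimal, since deleting an element leaves a matroid that is still vertically $4$-connected --- the cleanest instance being that $U_{s,n}/e \cong U_{s-1,n-1}$ remains cyclically $4$-connected, so uniform-like matroids are of no use here. Finding a gadget (or a design) that is simultaneously tight enough for minimality and uniform enough to avoid a $4$-cocircuit, and verifying that iterating it never creates a redundant element or a $4$-cocircuit, is the crux; I would expect the bulk of the argument to be a careful analysis of how each inserted gadget meets the hyperplanes --- equivalently, the cocircuits --- of the matroid built so far.
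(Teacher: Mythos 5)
There is a genuine gap: you have not proved anything, because the construction that the statement asserts to exist is never exhibited. Your first two paragraphs are a correct but routine translation --- dualizing (vertical $4$-connectivity of $M$ versus cyclic $4$-connectivity of $N=M^*$, $4$-cocircuits versus $4$-circuits) and the certificate ``$e$ lies in a $5$-circuit of $N$ containing a triad avoiding $e$,'' which is just the dual reading of a $5$-cocircuit of $M$ containing a triangle, exactly the structure Theorem~\ref{minimal} already predicts must be present. The substance of Conjecture~\ref{construction} lies entirely in the step you leave as a plan: no base matroid $N_0$ is given, no gadget is specified, and none of your invariants (i)--(iv) is verified for any concrete surgery; the design-theoretic alternative is likewise only named, not carried out. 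You yourself flag the reconciliation of minimality (every single-element deletion must destroy vertical $4$-connectivity) with the absence of $4$-cocircuits as ``the genuine obstacle'' and defer it, so the proposal is a research programme rather than a proof.

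For calibration: the paper does not prove this statement either --- it is stated as a conjecture, and the authors explicitly say that a construction claimed in an earlier version was found to be erroneous by the referee and that they could not repair it. So there is no proof in the paper to compare your route against, and the difficulty you identify is precisely where the known attempt broke down. To turn your outline into a proof you would need, at minimum, an explicit infinite family (or an explicit gadget with a fully checked induction) together with verifications that each member is vertically $4$-connected, has no $4$-cocircuit, and has, for \emph{every} element $e$, a vertical $3$-separation of $M\setminus e$ --- for instance via your $5$-cocircuit-with-triangle certificate, which itself needs the rank conditions on both sides of the separation checked, not just the $\lambda$ bound.
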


In a previous version of this paper we claimed to prove this conjecture, and we are grateful to the anonymous referee who pointed out an error in the proof.
While we could not fix this error, we expect that there is a clever construction that proves Conjecture \ref{construction}.


There is a key relationship between $4$-connectivity and vertical $4$-connectivity that allows us to use Theorem~\ref{RWZ} in the proof of Theorem~\ref{minimal}.
Specifically, a non-uniform matroid is minimally $4$-connected if and only if it is minimally vertically $4$-connected, and has no triangles.
Thus, by Theorem \ref{RWZ}, it suffices to prove Theorem \ref{minimal} for matroids with a triangle.
In fact, we show that every triangle of a minimally vertically $4$-connected matroid intersects a small cocircuit with special structure.

\begin{theorem} \label{minimal triangle}
Let $M$ be a minimally vertically $4$-connected matroid with a triangle $T$. Then
\begin{enumerate}[$(1)$]
\item $M$ has a $4$-cocircuit that contains exactly two elements of $T$, or

\item $M$ has a $5$-cocircuit that contains a triangle and exactly two elements of $T$, or 

\item  $|E(M)| \le 11$.
\end{enumerate}
\end{theorem}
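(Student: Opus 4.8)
We argue by contradiction. Suppose $M$ has no $4$-cocircuit meeting $T$ in exactly two elements, and no $5$-cocircuit that contains a triangle and meets $T$ in exactly two elements; the plan is to deduce that $|E(M)|\le 11$. Write $T=\{a,b,c\}$. The first step is to convert minimality into a supply of cocircuits. Since $M$ is vertically $4$-connected it has no coloops, so $r(M\del e)=r(M)$ for each $e$; and since $M\del e$ is not vertically $4$-connected, comparing the connectivity functions of $M$ and $M\del e$ and using that $M$ has no vertical $3$-separation shows (once $r(M)$ is not too small) that $M\del e$ has a vertical $3$-separation $(X,Y)$ with $\lambda_{M\del e}(X)=2$ and $e\notin\cl_M(X)\cup\cl_M(Y)$; call this a \emph{tight $e$-separation}. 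In such a separation $r_M(X)+r_M(Y)=r(M)+2$ with $3\le r_M(X),r_M(Y)\le r(M)-1$, so if the smaller side has rank $3$, say $r_M(X)=3$, then $r_M(Y)=r(M)-1$, whence $\cl_M(Y)$ is a hyperplane of $M$ avoiding $e$ and
\[
D^*:=E(M)\del\cl_M(Y)=\{e\}\cup\bigl(X\del\cl_M(Y)\bigr)
\]
is a cocircuit of $M$ with $e\in D^*$, $|D^*|=1+|X\del\cl_M(Y)|$, and $X\del\cl_M(Y)$ a set of rank at most $3$.

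Next I would bring in the triangle. Since $T$ is a triangle, $e\in\cl_M(T\del e)$, so $T\del e$ cannot lie wholly in $X$ or wholly in $Y$; name the element of $T\del e$ in $X$ by $f$ and the one in $Y$ by $g$. Then $g\in Y\subseteq\cl_M(Y)$, so $g\notin D^*$; and if $f\notin D^*$ then $f\in\cl_M(Y)$, which (as $\cl_M(Y)$ is a flat) forces $e\in\cl_M(\{f,g\})\subseteq\cl_M(Y)$, contradicting tightness. Hence $D^*\cap T=\{e,f\}$, exactly two elements. Recalling that $M$ has no triads (it is vertically $4$-connected of rank at least $4$), so $|D^*|\ge 4$, this reduces the theorem to the following: whenever $|E(M)|\ge 12$, some $e\in T$ admits a tight separation with $r_M(X)=3$, with $|X\del\cl_M(Y)|\le 4$, and with $X\del\cl_M(Y)$ containing a triangle in the case $|X\del\cl_M(Y)|=4$.

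This is the crux, and it calls for an extremal argument: among all $e\in T$ and all tight $e$-separations $(X,Y)$, choose one minimizing $r_M(X)$, then $|X|$, then $|X\del\cl_M(Y)|$. Three things must be ruled out. First, that the minimum of $r_M(X)$ exceeds $3$: this can occur only when $r(M)\ge 6$, and I would uncross $(X,Y)$ against a tight $e'$-separation for another $e'\in T$ (or against a second tight $e$-separation), using submodularity of $\lambda$ to produce a tight separation with a strictly smaller side. Second, that $|X\del\cl_M(Y)|\ge 5$: then $X\del\cl_M(Y)$ is a set of rank at most $3$ with at least five elements, so it contains a short circuit spanning a rank-$\le 3$ subset, which one uses to build a smaller rank-$3$ three-separating side. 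Third, that $|X\del\cl_M(Y)|=4$ with $X\del\cl_M(Y)$ a four-element circuit: here $D^*$ is a $5$-cocircuit with no triangle, and I would push the analysis one more step — examining $M\del x$ for $x\in X\del\cl_M(Y)$, or comparing $D^*$ with the cocircuit arising from a tight $g$-separation — to extract a genuinely triangle-containing small cocircuit. Wherever one of these reductions stalls, the ground set is forced to be covered by boundedly many rank-$\le 4$ pieces, and summing over the finitely many configurations yields $|E(M)|\le 11$.

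\textbf{Main obstacle.} I expect the hard part to be the extremal/uncrossing step of the previous paragraph: keeping a vertical $3$-separation simultaneously tight at an element of $T$, split on $T\del e$, equipped with a small rank-$3$ side, and minimal, while uncrossing it against the separations associated with the other two elements of $T$ — and then turning ``no further reduction is possible'' into the explicit bound $|E(M)|\le 11$. Dispatching the $5$-cocircuit-without-a-triangle configuration, and keeping straight which side of each of the three relevant separations each element of $T$ lies on, is the most delicate bookkeeping.
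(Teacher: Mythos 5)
Your opening reductions are sound and match facts used in the paper: minimality gives a vertical $3$-separation $(X,Y)$ of each $M\del e$ with $e\notin\cl_M(X)\cup\cl_M(Y)$; when the small side has rank $3$ the complement of the hyperplane $\cl_M(Y)$ is a cocircuit; and your argument that this cocircuit meets $T$ in exactly $\{e,f\}$ is correct. But everything after ``This is the crux'' is a statement of intent rather than a proof, and it is precisely the content of the theorem. You must establish three things: (a) some $e\in T$ admits a tight separation whose small side has rank $3$; (b) $|X\setminus\cl_M(Y)|\le 4$; and (c) when $|X\setminus\cl_M(Y)|=4$ the resulting $5$-cocircuit contains a triangle. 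For none of these do you give an argument that could be checked or completed. For (a), ``uncross to produce a tight separation with a strictly smaller side'' is not obviously available: uncrossing two $3$-separating sets via submodularity yields information about the four intersection pieces, not a new tight separation of some $M\del e$ with a smaller side, and preserving tightness (non-spanning of $e$ by either side) under such operations is exactly the delicate point. For (b), a rank-$3$ set with five or more elements need not contain a triangle or any structure that produces ``a smaller rank-$3$ three-separating side'' (consider a $U_{3,5}$-restriction), so that reduction does not go through as stated. For (c) you say only that you ``would push the analysis one more step.'' Finally, the bound $|E(M)|\le 11$ is asserted with no computation; nothing in your sketch identifies which bounded configurations arise or why their union has at most $11$ elements.

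For comparison, the paper does not work with rank-$3$ small sides at all. Its Lemma~\ref{technical} uncrosses a vertical $3$-separation of $M\del a$ against one of $M\del b$ for two elements $a,b$ of the triangle (in the style of Tutte's Triangle Lemma), and concludes that either a cocircuit $C^*$ with $M|C^*\cong U_{2,k}\oplus U_{2,2}$ and $|C^*\cap T|=2$ exists, or the intersection pieces $X_a\cap Y_b$ and $X_b\cap Y_a$ have rank $2$. Applying this to all three pairs from $T$ forces $A_e$ and $B_e$ to each be a union of two sets of rank at most $2$, which bounds $r(M)\le 6$ and, since minimality excludes $U_{2,4}$-restrictions (Lemma~\ref{4-pt line}), gives $|E(M)|\le 11$ after a short further argument; the excluded $U_{2,4}$-restrictions also force $k\le 3$, so $C^*$ is a $4$-cocircuit or a $5$-cocircuit containing a triangle. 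Your proposal would need an analogue of this rank-$2$ structure theorem to close the gap, and as written it does not contain one.
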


We can even relax the condition that $M$ is minimally vertically $4$-connected, and still find a cocircuit with specific structure.

\begin{theorem} \label{main}
Let $M$ be a vertically $4$-connected matroid with a triangle $T = \{e,f,g\}$ so that none of $M\del e$, $M \del f$, $M \del g$ is vertically $4$-connected.
Then either
\begin{enumerate}[$(1)$]
\item $M$ has a cocircuit $C^*$ so that $|C^* \cap T| = 2$ and $M|C^* \cong U_{2,k} \oplus U_{2,2}$ for some $k \ge 2$, or

\item $r(M) \le 6$, and, if $M$ has no $U_{2,4}$-restrictions, then $|E(M)| \le 11$. 
\end{enumerate}
\end{theorem}

This is an analogue of a result of Tutte \cite{Tutte} called Tutte's Triangle Lemma (see \cite[Lemma 8.7.7]{Oxley}), which finds a $3$-cocircuit, or \emph{triad}, that intersects a given triangle of a $3$-connected matroid.

\begin{theorem}[Tutte] \label{TTL}
Let $M$ be a $3$-connected matroid with a triangle $\{e,f,g\}$ so that neither $M \del e$ nor $M \del f$ is $3$-connected.
Then $M$ has a triad that contains $e$ and exactly one of $f$ and $g$.
\end{theorem}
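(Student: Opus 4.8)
The plan is to follow the classical route for results of this kind. First one shows that a single-element deletion which fails to be $3$-connected forces the deleted element out of the closures of both sides of every induced $2$-separation; then one uses this to locate a $2$-separation one of whose sides has exactly two elements, and reads the required triad off that side; what remains is closed by an uncrossing argument that uses both hypotheses simultaneously. To begin, I would note that the hypotheses force $|E(M)|\ge 6$, so $M$ has no loops or coloops and both $M\del e$ and $M\del f$ are connected; since neither is $3$-connected, each has a $2$-separation. The key observation is: if $(X,Y)$ is a $2$-separation of $M\del e$ then $\lambda_{M\del e}(X)=1$, and a short rank computation using the $3$-connectivity of $M$ and the fact that $e$ is not a coloop shows $e\notin\cl_M(X)$ and $e\notin\cl_M(Y)$. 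Since $\{e,f,g\}$ is a triangle, $e\in\cl_M(\{f,g\})$, so neither side of $(X,Y)$ contains both $f$ and $g$; that is, $(X,Y)$ separates $f$ from $g$. Symmetrically, every $2$-separation of $M\del f$ separates $e$ from $g$.

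Second, I would show that a $2$-separation with a side of size two yields the conclusion. If $(X,Y)$ is a $2$-separation of $M\del e$ with $|X|=2$, then by the first step $X$ contains exactly one element $h$ of $\{f,g\}$, say $X=\{h,x\}$ with $x\notin\{e,f,g\}$; as $M$ has no parallel pair, $r_{M\del e}(X)=2$, so $\cl_{M\del e}(Y)$ is a hyperplane of $M\del e$, and since $e\notin\cl_M(Y)$ this flat equals $\cl_M(Y)$, a hyperplane of $M$ whose complement $\{e,h,x\}$ is a triad of $M$ containing $e$ and exactly one of $f$ and $g$. The same computation applied to $M\del f$ finishes the proof whenever $M\del f$ has a $2$-separation with a side of size two containing $e$. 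Hence I may assume that every $2$-separation of $M\del e$ has both sides of size at least three and that $M\del f$ has no $2$-separation with a side of size two containing $e$ (so in particular $|E(M)|\ge 7$); if $M\del f$ still has a $2$-separation with a side of size two, that side is $\{g,x\}$, which makes $\{f,g,x\}$ a triad of $M$ — a configuration that must still be converted into a triad through $e$ or excluded.

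Third, in this remaining case I would choose a $2$-separation $(A,B)$ of $M\del e$ with $f\in A$, $g\in B$ and $|A|$ minimum (so $|A|\ge 3$), and a $2$-separation $(C,D)$ of $M\del f$ with $e\in C$, $g\in D$ and $|C|$ minimum (so $|C|\ge 3$). Minimality of $|A|$ forces, for every $a\in A\del\{f\}$, that $a\in\cl_{M\del e}(A\del\{a\})$ and $a\notin\cl_{M\del e}(B)$, while $f\notin\cl_{M\del e}(B)$ since otherwise $(A\del\{f\},B\cup\{f\})$ would be a $2$-separation of $M\del e$ failing to separate $f$ from $g$; together these say $B$ is a flat of $M$, and symmetrically $D$ is a flat of $M$. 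I would then uncross the $3$-separations $(A\cup e,B)$ and $(C\cup f,D)$ of $M$: submodularity of the connectivity function, combined with the $3$-connectivity of $M$ (so $\lambda_M\ge 2$ on every set whose complement also has at least two elements), the flatness of $B$ and $D$, and the closure constraints above, pins down the connectivity of the four corners $A\cap C$, $A\cap D$, $B\cap C$ and $B\cap D$; each resulting possibility either contradicts the $3$-connectivity of $M$, or produces a $2$-separation of $M\del e$ or of $M\del f$ with a side of size two, or violates the minimality of $|A|$ or of $|C|$, and the resulting contradiction completes the proof.

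The hard part will be this final uncrossing. One must keep track of the four corners, of the positions of $e$, $f$, $g$, and of which of the many inequalities produced by submodularity are tight, all at once; and one must ensure that any triad eventually exhibited runs through $e$ rather than merely through one of $f$, $g$ — in particular that the residual ``$\{f,g,x\}$ is a triad'' configuration is either upgraded to a triad through $e$ (using the triangle $\{e,f,g\}$ together with the orthogonality of circuits and cocircuits) or shown to be incompatible with the hypotheses. This bookkeeping is the matroidal analogue of the vertex-shifting step in Halin's theorem, and it is the only point at which the hypotheses on both $M\del e$ and $M\del f$ are used together.
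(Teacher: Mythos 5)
Your first two steps are essentially sound: the observation that $e\notin\cl_M(X)$ and $e\notin\cl_M(Y)$ for any $2$-separation $(X,Y)$ of $M\del e$, hence that every such separation splits $\{f,g\}$, is correct, and the extraction of a triad from a $2$-separation with a two-element side works (modulo the small omitted check that neither element of that side lies in $\cl_M(Y)$, which follows since $M$ has no cocircuit of size at most two). The genuine gap is the third step, which is where the actual content of Theorem~\ref{TTL} lives and which you never carry out. You choose minimal separations $(A,B)$ of $M\del e$ and $(C,D)$ of $M\del f$, assert that submodularity ``pins down the connectivity of the four corners,'' and claim that every resulting possibility contradicts $3$-connectivity, produces a small side, or violates minimality --- but no corner is actually analysed, no tightness is established, and you yourself flag this as ``the hard part.'' As written this is a plan, not a proof; the delicate point, which a symmetric uncrossing of $(A\cup e,B)$ and $(C\cup f,D)$ does not automatically handle, is that the conclusion is asymmetric in $e$: any small cocircuit the argument produces must be shown to pass through $e$, not merely through two elements of the triangle. (For reference, the paper does not reprove this lemma; it cites Tutte and \cite[Lemma 8.7.7]{Oxley}, whose proof is exactly this kind of careful closure-and-case analysis.)

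A second, related loose end is the branch you explicitly leave open in step two: if $M\del f$ has a $2$-separation with small side $\{g,x\}$, you obtain a triad $\{f,g,x\}$ avoiding $e$. Orthogonality with the triangle does not exclude this configuration (a triad may meet a triangle in two elements), and you give no argument that converts it into a triad through $e$ or rules it out; saying it ``must still be converted \ldots or excluded'' is again a statement of intent. Until both the uncrossing case analysis and this residual configuration are worked out in full, the proposal does not establish the theorem.
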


Our proof of Theorem \ref{main} follows the proof of Theorem \ref{TTL}.
Before proving Theorems \ref{minimal}, \ref{binary}, \ref{minimal triangle}, and \ref{main} in Section \ref{main proofs}, we discuss some preliminaries in Section \ref{preliminaries}.
We close by discussing several related open problems in Section \ref{open}.
\end{section}


\begin{section}{Preliminaries} \label{preliminaries}
We follow the notation of Oxley \cite{Oxley}.
Given a matroid $M$ with ground set $E$ and rank function $r$, the function $\lambda_M$ defined by 
\begin{align*}
\lambda_M(X) = r(X) + r(E - X) - r(M)
\end{align*}
is the \emph{connectivity function} of $M$.
Tutte \cite{Tutte} proved that this function is \emph{submodular}, which means that all $X, Y \subseteq E(M)$ satisfy
\begin{align*}
\lambda_M(X) + \lambda_M(Y) \ge \lambda_M(X \cup Y) + \lambda_M(X \cap Y).
\end{align*}
For a positive integer $j$, a partition $(X,Y)$ of the ground set of a matroid $M$ is a \emph{vertical j-separation} of $M$ if $\lambda_M(X) < j$ and $\min\{r(X), r(Y)\} \ge j$.
The vertical $j$-separation is \emph{exact} if $\lambda_M(X) = j - 1$.
For an integer $k$ exceeding one, a matroid $M$ is \emph{vertically k-connected} if it has no vertical $j$-separations with $j < k$.

While this is the main notion of connectivity in this paper, we also make some use of (Tutte) $k$-connectivity.
For $j \ge 1$, a partition $(X,Y)$ of the ground set of a matroid $M$ is a \emph{j-separation} of $M$ if $\lambda_M(X) < j$ and $\min\{|X|, |Y|\} \ge j$.
The $j$-separation is \emph{exact} if $\lambda_M(X) = j - 1$.
For $k \ge 2$, a matroid $M$ is \emph{k-connected} if it has no $j$-separations with $j < k$.
Every vertical $j$-separation is a $j$-separation, so every $k$-connected matroid is vertically $k$-connected.

There are two natural relationships between $k$-connectivity and vertical $k$-connectivity for non-uniform matroids.
First, a non-uniform matroid is $k$-connected if and only if it is vertically $k$-connected, and has no circuits with fewer than $k$ elements.
In particular, a non-uniform simple matroid is $4$-connected if and only if it is vertically $4$-connected and has no triangles.
Second, a non-uniform matroid $M$ is $k$-connected if and only if $M$ and $M^*$ are both vertically $k$-connected.
These two relationships combine to show that a non-uniform vertically $k$-connected matroid has no cocircuits with fewer than $k$ elements.
In particular, a non-uniform vertically $4$-connected matroid has no triads.
We need one more useful property of vertically $4$-connected matroids.

\begin{lemma} \label{4-pt line}
Let $M$ be a vertically $4$-connected matroid. 
For a subset $X$ of $E(M)$ and an element $x$ of $X$, if $M|X$ is isomorphic to $U_{2,4}$ or $U_{2,4} \oplus_2 U_{2,4}$, then $M \del x$ is vertically $4$-connected.
\end{lemma}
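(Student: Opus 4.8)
The plan is to argue by contradiction. Suppose $M \del x$ has a vertical $j$-separation $(A,B)$ with $j \le 3$; I would then show that adding $x$ to one of $A, B$ produces a vertical $j$-separation of $M$, contradicting its vertical $4$-connectivity. The key observation is that in both cases $X \del x$ spans $X$ in $M$: for $U_{2,4}$ this holds because $U_{2,4}\del x \cong U_{2,3}$ has rank $2$, and for $U_{2,4}\oplus_2 U_{2,4}$ the deletion of any single point still has rank $3$. Hence $x \in \cl_M(X\del x)$, so $x$ is not a coloop of $M$ and $r(M\del x) = r(M)$; moreover, whenever $S \subseteq E(M)\del x$ satisfies $x \in \cl_M(S)$ we have $r_M(S \cup \{x\}) = r_M(S) = r_{M\del x}(S)$.

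Suppose such an $S$ has been found with, say, $S \subseteq A$. Then $(A \cup \{x\}, B)$ is a partition of $E(M)$ with both parts nonempty (the rank of $B$ is at least $j \ge 1$), and
$$\lambda_M(A \cup \{x\}) = r_M(A \cup \{x\}) + r_M(B) - r(M) = r_{M\del x}(A) + r_{M\del x}(B) - r(M\del x) = \lambda_{M\del x}(A) < j,$$
while $r_M(A \cup \{x\}) = r_{M\del x}(A) \ge j$ and $r_M(B) = r_{M\del x}(B) \ge j$. So $(A\cup\{x\},B)$ is a vertical $j$-separation of $M$ with $j \le 3$, the desired contradiction. So it suffices to find a subset of $A$ or of $B$ whose $M$-closure contains $x$.

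Put $A' = A \cap X$ and $B' = B \cap X$, so $(A',B')$ is a partition of $X \del x$. If $M|X \cong U_{2,4}$, then $X\del x$ is three points on a line of $M$, so the larger of $A', B'$ contains two of them, and those two span $x$. If $M|X \cong U_{2,4}\oplus_2 U_{2,4}$, then by the symmetry of the $2$-sum I may take $x$ to be a point $a_1$ of one of the two $3$-point lines $\{a_1,a_2,a_3\}$ of $M|X$, with $\{b_1,b_2,b_3\}$ the other; then $x \in \cl_M(\{a_2,a_3\})$, and also $x \in \cl_M(S)$ for every $S \subseteq X\del x$ with $r_M(S) = 3$, that is, for every $S$ not contained in a line of $M|X$. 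If neither $A'$ nor $B'$ contained $\{a_2,a_3\}$ or had rank $3$, then $a_2$ and $a_3$ would lie in different parts, and each part would be contained in a line of $M|X$ meeting $\{a_2,a_3\}$ in exactly one point, hence would meet $\{b_1,b_2,b_3\}$ in at most one point — impossible since $\{b_1,b_2,b_3\} \subseteq A' \cup B'$. So one of $A', B'$ contains a subset $S$ as required, and we are done.

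I expect the only real work is the bookkeeping in the $U_{2,4}\oplus_2 U_{2,4}$ case — enumerating the lines of $M|X$ and checking that no partition of $X\del x$ avoids every subset that spans $x$; the rest is the routine ``push the element across the separation'' manoeuvre, which would go through verbatim for any $x \in X$ with $x \in \cl_M(X\del x)$.
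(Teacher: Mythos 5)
Your proposal is correct and follows essentially the same route as the paper: show that one side of any vertical separation of $M \del x$ spans $x$ in $M$ (using that two points of the $U_{2,4}$, or the set $\{a_2,a_3\}$ or any rank-$3$ subset in the $2$-sum case, spans $x$), and then push $x$ across to obtain a vertical separation of $M$ itself. Your treatment is if anything slightly more careful than the paper's, in that you handle all $j \le 3$ and spell out the rank bookkeeping explicitly.
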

\begin{proof}
Suppose that $M|X \cong U_{2,4}$.
Let $(A,B)$ be a vertical $3$-separation of $M \del x$.
Then $A$ or $B$ contains at least two elements of $X - x$ and thus spans $x$, so either $(A \cup x, B)$ or $(A, B \cup x)$ is a vertical $3$-separation of $M$, a contradiction.

Suppose that $M|X \cong U_{2,4} \oplus_2 U_{2,4}$.
Then $X$ is the union of disjoint triangles $T_1$ and $T_2$.
Let $x \in T_1$, and let $(A,B)$ be a vertical $3$-separation of $M \del x$.
Then, without loss of generality, $A$ contains at least three elements of $(T_1 \cup T_2) - x$.
As $A$ does not span $x$, we deduce that $A \cap X = T_2$, so $B$ spans $x$, a contradiction.
\end{proof}

\end{section}


\begin{section}{The proofs of the main results} \label{main proofs}
In this section, we prove Theorems \ref{minimal}, \ref{binary}, \ref{minimal triangle}, and \ref{main}.
We first prove a lemma about the interaction of vertical $3$-separations of $M \del a$ and $M \del b$, where $a$ and $b$ are in a common triangle.

\begin{lemma} \label{technical}
Let $M$ be a vertically $4$-connected simple matroid with a triangle $T = \{a, b, c\}$.
Let $(X_a, Y_a)$ and $(X_b, Y_b)$ be exact vertical $3$-separations of $M \del a$ and $M \del b$, respectively, so that $b \in X_a$ and $a \in X_b$. 
Then either
\begin{enumerate}[$(i)$]
\item $M$ has a cocircuit $C^*$ so that $|C^* \cap T| = 2$ and $M|C^* \cong U_{2,k} \oplus U_{2,2}$ for some $k \ge 2$, or

\item $r(X_a \cap Y_b) = 2$ and $r(X_b \cap Y_a) = 2$, and either $|X_a \cap X_b| = 1$, or $r(Y_a \cap Y_b) \le 2$ and $X_a \cap X_b \ne \varnothing$.
\end{enumerate}
\end{lemma}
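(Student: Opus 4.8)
The plan is to uncross the two vertical $3$-separations using submodularity of $\lambda_M$, with the triangle $T$ supplying the crucial rank coincidences. First I would locate $c$: since $M$ is simple, $a,b,c$ are distinct, and I claim $c \in Y_a \cap Y_b$. Indeed, if $c \in X_a$ then $\{b,c\} \subseteq X_a$ spans $a$, so $r_M(X_a \cup a) = r_M(X_a)$ and hence $\lambda_M(X_a \cup a) = \lambda_{M \del a}(X_a) = 2$; as $r_M(X_a \cup a), r_M(Y_a) \ge 3$, this makes $(X_a \cup a, Y_a)$ a vertical $3$-separation of $M$, a contradiction, so $c \in Y_a$, and symmetrically $c \in Y_b$. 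The same trick shows that $a \notin \cl_M(X_a) \cup \cl_M(Y_a)$ and $b \notin \cl_M(X_b) \cup \cl_M(Y_b)$ (otherwise moving $a$ or $b$ to the appropriate side gives a vertical $3$-separation of $M$), so $\lambda_M(X_a) = \lambda_M(Y_a) = \lambda_M(X_b) = \lambda_M(Y_b) = 3$. I then set $P = X_a \cap X_b$, $Q = X_a \cap Y_b$, $R = Y_a \cap X_b$, and $S = Y_a \cap Y_b$, so that $E(M)$ is the disjoint union $P \cup Q \cup R \cup S \cup \{a,b\}$ with $c \in S$, $X_a = P \cup Q \cup \{b\}$, $Y_a = R \cup S$, $X_b = P \cup R \cup \{a\}$, and $Y_b = Q \cup S$.

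The heart of the argument is a single application of submodularity to $X_a$ and $Y_b$. Here $X_a \cap Y_b = Q$ and $X_a \cup Y_b = E(M) \setminus (R \cup a)$; since $R \subseteq Y_a$ and $a \notin \cl_M(Y_a)$ we get $r_M(R \cup a) = r_M(R) + 1$, and since $\{b,c\} \subseteq E(M) \setminus (R \cup a)$ we get $a \in \cl_M(E(M) \setminus (R \cup a))$, so $\lambda_M(R \cup a) = \lambda_M(R) + 1$. Thus $6 = \lambda_M(X_a) + \lambda_M(Y_b) \ge \lambda_M(R \cup a) + \lambda_M(Q) = \lambda_M(R) + \lambda_M(Q) + 1$, that is, $\lambda_M(Q) + \lambda_M(R) \le 5$. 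Since $X_b \subseteq E(M) \setminus Q$ and $Y_b \subseteq E(M) \setminus R$ both have rank at least $3$, vertical $4$-connectivity forces $\lambda_M(Q) \ge 3$ or $r_M(Q) \le 2$ (and likewise for $R$), and $\lambda_M(Z) = 2$ forces $r_M(Z) = 2$ for $Z \in \{Q,R\}$. In particular $\lambda_M(Q)$ and $\lambda_M(R)$ are not both at least $3$, so at least one of $Q,R$ has rank at most $2$; since the hypotheses and both conclusions are symmetric under exchanging $(a,X_a,Y_a)$ with $(b,X_b,Y_b)$ (which swaps $Q$ and $R$ and fixes $P$ and $S$), I may assume $r_M(R) \le 2$.

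It then remains to analyse cases according to $r_M(Q)$, $r_M(P)$, and $r_M(S)$, using the identities $r_M(X_a) + r_M(Y_a) = r_M(X_b) + r_M(Y_b) = r(M) + 2$ and the bounds $r_M(X_a), r_M(Y_a), r_M(X_b), r_M(Y_b) \ge 3$. When $r_M(Q) = r_M(R) = 2$, I would deduce from these identities applied to $X_a = P \cup Q \cup \{b\}$, $Y_b = Q \cup S$ and their analogues that $|P| = 1$, or that $P \ne \varnothing$ and $r_M(S) \le 2$ --- conclusion $(ii)$. In the remaining cases --- when some quadrant is too small (e.g.\ $Q$ empty or a single point, or $r_M(R) \le 1$) or too large ($r_M(Q) \ge 3$, which by the previous paragraph forces $\lambda_M(R) \le 2$; or $r_M(S) \ge 3$ while $|P| \ge 2$) --- I expect to build the cocircuit of conclusion $(i)$ as a union $C^*$ of a rank-$2$ quadrant with a second rank-at-most-$2$ set assembled from the other quadrants and the elements of $T$, and then verify that $E(M) \setminus C^*$ is a flat of rank $r(M) - 1$ (here vertical $4$-connectivity keeps its rank from falling further), that $|C^* \cap T| = 2$, and that the two rank-$2$ pieces are skew, so $M|C^* \cong U_{2,k} \oplus U_{2,2}$.

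The conceptual content is the inequality $\lambda_M(Q) + \lambda_M(R) \le 5$ together with the vertical-connectivity consequences that pin $Q$ and $R$ down; the main obstacle is the final bookkeeping. In every configuration not of type $(ii)$ one must choose the correct pair of rank-at-most-$2$ sets forming $C^*$ and check simultaneously that $E(M) \setminus C^*$ is closed of corank one, that $|C^* \cap T| = 2$, and that the two pieces remain skew; managing the disjointness of the four quadrants, the positions of $a$, $b$, $c$, and the accumulated rank equalities is where essentially all of the effort lies.
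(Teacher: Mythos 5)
Your setup is sound through the placement of $c$ in $Y_a \cap Y_b$ and the computation $\lambda_M(X_a)=\lambda_M(Y_a)=\lambda_M(X_b)=\lambda_M(Y_b)=3$, and your identity $\lambda_M(R\cup a)=\lambda_M(R)+1$ is correct. But the inequality you extract, $\lambda_M(Q)+\lambda_M(R)\le 5$, is too weak to prove the lemma, and this is a genuine gap rather than deferred bookkeeping. Conclusion $(ii)$ requires \emph{both} $r(X_a\cap Y_b)=2$ and $r(X_b\cap Y_a)=2$; your inequality only shows that at least one of $\lambda_M(Q),\lambda_M(R)$ is at most $2$, hence that at least one of $Q,R$ has rank at most $2$. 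The case $\lambda_M(Q)=3$ with $r(Q)\ge 3$ is left entirely open, and the symmetric application of submodularity (to $Y_a$ and $X_b$) returns the same bound $\le 5$, so it cannot be closed the same way. The paper avoids this by first proving that $M\del a,b$ is $3$-connected and that $a\in\cl(X_b-a)$ and $b\in\cl(X_a-b)$, and then uncrossing $X_a-b$ with $Y_b$ \emph{inside} $M\del a,b$, where $\lambda_{M\del a,b}(X_a-b)\le 2$ and $\lambda_{M\del a,b}(Y_b)\le 2$; this yields $\lambda_{M\del a,b}(Q)+\lambda_{M\del a,b}(R)\le 4$, one unit sharper than your bound, and combined with the lower bound $\lambda_{M\del a,b}\ge 2$ coming from $3$-connectedness it pins both quantities to exactly $2$, which then lifts to $\lambda_M=2$ and $r=2$ via the closure claims. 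You would need this sharper inequality, or a separate argument that $r(Q)\ge 3$ forces outcome $(i)$, neither of which your tools supply.

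Second, even for the quadrant you do control, ``rank at most $2$'' is not ``rank exactly $2$'': the degenerate cases $Q=\varnothing$ and $|Q|=1$ must be shown to yield outcome $(i)$ or a contradiction, and in the paper the singleton case is the most delicate step of the proof (it uses the \emph{other} uncrossing inequality, on $X_a\cap X_b$ and $Y_a\cap Y_b$, together with the $3$-connectedness of $M\del a,b$). These cases, together with every construction of a cocircuit $C^*$ satisfying $M|C^*\cong U_{2,k}\oplus U_{2,2}$ and $|C^*\cap T|=2$, are where the content of the lemma lies, and your proposal defers all of them to ``bookkeeping'' without indicating how they would go. As written, the argument establishes a correct but insufficient inequality and does not reach either conclusion of the lemma.
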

\begin{proof}
Suppose that neither $(i)$ nor $(ii)$ holds.
Because $M$ has a triangle but has rank at least three, $M$ is not uniform.
Thus, it has no cocircuits of size less than four.
Observe that $c \in Y_a \cap Y_b$ otherwise $(X_a \cup a, Y_a)$ or $(X_b \cup b, Y_b)$ is a vertical $3$-separation of $M$.

\begin{claim} \label{3-conn}
$M \del a,b$ is $3$-connected.
\end{claim}
For some $j \in\{1,2\}$, let $(A, B)$ be a $j$-separation of $M \del a,b$ with $c \in A$ where $A$ is closed.
Then $\lambda_M(A \cup \{a, b\}) \le j$, and so either $r(A \cup \{a, b\}) \le j$ or $r(B) \le j$.
As $M$ is simple and has no triads, $r(A \cup \{a,b\}) \ge 2$ and $r(B) \ge 2$.
Thus $j = 2$, and $r(A \cup \{a,b\}) = 2$ or $r(B) = 2$.
Suppose that $r(B) = 2$.
Then $(i)$ holds with $C^* = B \cup \{a, b\}$.
If $r(A \cup \{a, b\}) = 2$, then $M|(A' \cup \{a,b\}) \cong U_{2,4}$ for a $2$-element subset $A'$ of $A$, so, by Lemma \ref{4-pt line}, $M \del a$ is vertically $4$-connected, a contradiction.
Thus \ref{3-conn} holds.

We make repeated use of the following.

\begin{claim} \label{closure}
$a \in \cl(X_b - a)$ and $b \in \cl(X_a - b)$.
\end{claim}
If $a \notin \cl(X_b - a)$, then $\lambda_{M\del b}(X_b - a) \le 2$.
But then the complement of $X_b - a$ in $M \del b$ contains $a$ and $c$ and thus spans $b$, so $\lambda_M(X_b - a) \le 2$.
This implies that $r_M(X_b - a) = 2$, so $(i)$ holds with $C^* = (X_b \cup b) - \cl(Y_b)$.
Thus \ref{closure} holds.

The following is due to the submodularity of $\lambda_{M \del a,b}$.

\begin{claim} \label{uncrossing}
\begin{enumerate}[$(i)$]
\item $\lambda_{M \del a,b}(Y_a \cap Y_b) + \lambda_{M \del a,b}(X_a \cap X_b) \le 4$, and

\item $\lambda_{M \del a,b}(X_a \cap Y_b) + \lambda_{M \del a,b}(X_b \cap Y_a) \le 4$.
\end{enumerate}
\end{claim}
We prove \ref{uncrossing}$(ii)$.
The proof of \ref{uncrossing}$(i)$ is similar.
We have $\lambda_{M \del a,b}(X_a - b) \le \lambda_{M \del a}(X_a) = 2$. 
Similarly, $\lambda_{M \del a,b}(Y_b) \le 2$.
Then 
\begin{align}
4 &\ge \lambda_{M \del a,b}(X_a - b) + \lambda_{M \del a,b}(Y_b) \\
&\ge \lambda_{M \del a,b}(X_a \cap Y_b) + \lambda_{M \del a,b}((X_a \cup Y_b) - b) \\
&= \lambda_{M \del a,b}(X_a \cap Y_b) + \lambda_{M \del a,b}(X_b \cap Y_a),
\end{align}
where (2) holds by the submodularity of $\lambda_{M \del a,b}$, and (3) holds because the complement of $(X_a \cup Y_b) - b$ in $M \del a,b$ is $X_b \cap Y_a$.
Thus \ref{uncrossing}$(ii)$ holds.

We now determine the ranks of $X_a \cap Y_b$ and $X_b \cap Y_a$.

\begin{claim} \label{ranks}
$r(X_a \cap Y_b) = r(X_b \cap Y_a) = 2$.
\end{claim}
If $X_a \cap Y_b = \varnothing$, then $X_a - b \subseteq X_b$. 
But then $X_b$ spans $b$, so $(X_b \cup b, Y_b)$ is a vertical $3$-separation of $M$, a contradiction.
Suppose that $X_a \cap Y_b = \{d\}$.
Then $|X_a \cap X_b| \ge 2$, or else $(i)$ holds with $C^* = X_a \cup a$.
Similarly, $|Y_a \cap Y_b| \ge 2$, or else $\{d ,c, b\}$ is a triad of $M$.
Then \ref{3-conn} implies that $\lambda_{M \del a,b}(X_a \cap X_b) \ge 2$ and $\lambda_{M \del a,b}(Y_a \cap Y_b) \ge 2$.
By \ref{uncrossing}$(i)$, it follows that $\lambda_{M \del a,b}(Y_a \cap Y_b) = 2$.
The complement of $Y_a \cap Y_b$ in $M \del a,b$ contains $X_a - b$ and $X_b - a$, and thus spans $a$ and $b$, by \ref{closure}.
This implies that $\lambda_M(Y_a \cap Y_b) = 2$, and so $r(Y_a \cap Y_b) = 2$, since $M$ is vertically $4$-connected.
But then $(i)$ holds with $C^* = (Y_b \cup b) - \cl(X_b)$, a contradiction.
Thus, $|X_a \cap Y_b| \ge 2$, and by symmetry, $|X_b \cap Y_a| \ge 2$.

Now $\lambda_{M \del a,b}(X_a \cap Y_b) \ge 2$ and $\lambda_{M \del a,b}(X_b \cap Y_a) \ge 2$, by \ref{3-conn}.
Then \ref{uncrossing}$(ii)$ implies that $\lambda_{M \del a,b}(X_a \cap Y_b) = 2$.
The complement of $X_a \cap Y_b$ in $M \del a,b$ contains $X_b - a$, and thus spans $a$, by \ref{closure}.
But then it also spans $b$ since $\{a, b, c\}$ is a triangle, and so $\lambda_{M}(X_a \cap Y_b) = 2$.
Since $M$ is vertically $4$-connected and $|X_a \cap Y_b| \ge 2$, it follows that $r(X_a \cap Y_b) = 2$.
By symmetry, $r(X_b \cap Y_a) = 2$ as well.
Thus \ref{ranks} holds.

Suppose that $X_a \cap X_b = \varnothing$.
Then since $r(X_a \cap Y_b) = 2$, outcome $(i)$ holds with $C^* = (X_a \cup a) - \cl(Y_a)$, a contradiction.
Thus $|X_a \cap X_b| \ge 1$.
Since $(ii)$ does not hold, we have $r(X_a \cap X_b) \ge 2$ and $r(Y_a \cap Y_b) \ge 3$.
Then \ref{3-conn} implies that $\lambda_{M \del a,b}(X_a \cap X_b) \ge 2$ and $\lambda_{M \del a,b}(Y_a \cap Y_b) \ge 2$.
By \ref{uncrossing}$(i)$, it follows that $\lambda_{M \del a,b}(Y_a \cap Y_b) = 2$.
The complement of $Y_a \cap Y_b$ in $M \del a,b$ contains $X_b - a$ and $X_a - b$, and thus spans $a$ and $b$, by \ref{closure}.
Thus, $\lambda_M(Y_a \cap Y_b) = 2$, and so $r(Y_a \cap Y_b) = 2$, a contradiction.
\end{proof}


The following easily implies Theorem \ref{main}. 
We add an extra condition to outcome (2) to help deal with matroids on at most $11$ elements.

\begin{theorem} \label{main extra}
Let $M$ be a vertically $4$-connected matroid with a triangle $T = \{e,f,g\}$ so that none of $M\del e$, $M \del f$, $M \del g$ is vertically $4$-connected.
Then either
\begin{enumerate}[$(1)$]
\item $M$ has a cocircuit $C^*$ so that $|C^* \cap T| = 2$ and $M|C^* \cong U_{2,k} \oplus U_{2,2}$ for some $k \ge 2$; or

\item $r(M) \le 6$, and if $M$ has no $U_{2,4}$-restrictions, then $|E(M)| \le 11$, while if $|E(M)| = 11$, then $M$ has disjoint triangles $T_1$ and $T_2$, neither of which is $T$.
\end{enumerate}
\end{theorem}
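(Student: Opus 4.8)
The plan is to mimic the proof of Tutte's Triangle Lemma while tracking the extra structure provided by Lemma~\ref{technical}. Since none of $M \del e$, $M \del f$, $M \del g$ is vertically $4$-connected but $M$ is, each of these deletions has an exact vertical $3$-separation. First I would fix an exact vertical $3$-separation $(X_e, Y_e)$ of $M \del e$; since $(X_e \cup e, Y_e)$ is not a vertical $3$-separation of $M$, exactly one side fails to span $e$, and after swapping names we may assume $Y_e$ spans $e$, hence (as $T$ is a triangle) $Y_e$ spans $f$ and $g$ too, forcing $f, g \in X_e$ — wait, more carefully, $f$ and $g$ need not lie in $X_e$, but at least one of them does, say $f \in X_e$, since $e \in \cl(\{f,g\})$ and $Y_e$ cannot span $e$ only if it misses enough of $T$. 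Similarly pick an exact vertical $3$-separation $(X_f, Y_f)$ of $M \del f$ with $e \in X_f$. This puts us in the hypotheses of Lemma~\ref{technical} applied to the pair $a = e$, $b = f$ (with $c = g$): outcome $(i)$ of that lemma is exactly outcome $(1)$ here, so we may assume outcome $(ii)$ holds, giving $r(X_e \cap Y_f) = r(X_f \cap Y_e) = 2$ and either $|X_e \cap X_f| = 1$ or ($r(Y_e \cap Y_f) \le 2$ and $X_e \cap X_f \ne \varnothing$).

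Next I would extract a rank bound on $M$. The ground set of $M \del e,f$ is partitioned into the four cells $X_e \cap X_f$, $X_e \cap Y_f$, $X_f \cap Y_e$, $Y_e \cap Y_f$. Two of these have rank $2$, and in the first branch a third is a single element; using submodularity of the rank function across this partition, together with the facts that $Y_e \cap Y_f$ contains $g$ and (via Claim~\ref{closure}-type arguments, which are available inside the proof of Lemma~\ref{technical} but I would re-derive the needed closure statements) the union of the low-rank cells spans all of $T$, I can bound $r(M)$. In the branch $|X_e \cap X_f| = 1$ we get $r(M) \le 2 + 2 + 1 = 5$ roughly; in the branch $r(Y_e \cap Y_f) \le 2$ we get $r(M) \le 2 + 2 + 2 = 6$. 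Either way $r(M) \le 6$, which is the first half of outcome~$(2)$. Here I should be careful that the cells may overlap in closure, so the honest bound comes from $r(M) \le r(X_e \cap Y_f) + r(X_f \cap Y_e) + r((X_e \cap X_f) \cup (Y_e \cap Y_f))$ and a symmetric estimate, and I would also invoke the third separation $(X_g, Y_g)$ of $M \del g$ if needed to tighten the argument or to rule out degenerate configurations.

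For the element bound when $M$ has no $U_{2,4}$-restriction: with $r(M) \le 6$ and no four-point lines, each rank-$2$ cell has at most $3$ elements, so $|E(M \del e,f)| \le 3 + 3 + s$ where $s = |(X_e \cap X_f) \cup (Y_e \cap Y_f)|$; I then need $s \le 5$ or so, which follows because that set has rank at most $4$ (in the worst branch) and a simple rank-$4$ matroid with no $U_{2,4}$ would still be large, so this is where I must be most careful — I would instead argue that $X_e \cap X_f$ and $Y_e \cap Y_f$ are themselves low-rank (rank $\le 2$ in the relevant branch, or one is a single point), giving $|E(M)| \le 2 + 3 + 3 + 3 = 11$. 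The final clause — that if $|E(M)| = 11$ then $M$ has two disjoint triangles, neither equal to $T$ — should fall out of the extremal analysis: equality forces three of the four cells to be triangles (rank-$2$ with exactly $3$ elements), and since $T$ meets at least two cells, $T$ is not one of these cell-triangles, so two disjoint cell-triangles distinct from $T$ survive; I would verify that at least two full cells are triangles rather than just one, using that $r(M) = 6$ is then also forced and counting $11 = 2 + 3 + 3 + 3$.

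The main obstacle I anticipate is the bookkeeping in the second branch of Lemma~\ref{technical}$(ii)$, where $X_e \cap X_f$ is nonempty but not a singleton and $r(Y_e \cap Y_f) \le 2$: here the four cells can share closures in complicated ways, and getting the sharp rank bound $r(M) \le 6$ and the sharp count $|E(M)| \le 11$ requires carefully choosing which pair of separations to uncross (possibly bringing in $(X_g, Y_g)$) and checking that no cell secretly has larger rank. The $U_{2,4}$-restriction hypothesis is doing real work in the counting step, so isolating exactly where it is used — and confirming the $11$-element extremal configuration genuinely contains two disjoint triangles avoiding $T$ — is the delicate part of the argument.
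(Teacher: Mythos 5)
Your overall strategy matches the paper's: apply Lemma~\ref{technical}, note that its outcome $(i)$ is outcome $(1)$ of the theorem, and extract rank and size bounds from outcome $(ii)$. But there is a genuine gap in how you use the lemma. A single application to the pair $(e,f)$ controls only two of the four cells of the partition of $E(M\del e,f)$: outcome $(ii)$ gives $r(X_e\cap Y_f)=r(X_f\cap Y_e)=2$ and \emph{either} $|X_e\cap X_f|=1$ (saying nothing about $Y_e\cap Y_f$) \emph{or} $r(Y_e\cap Y_f)\le 2$ (saying nothing about $X_e\cap X_f$ beyond nonemptiness). In the first branch the cell $Y_e\cap Y_f$ has unbounded rank, so your claimed estimate $r(M)\le 2+2+1=5$ does not follow; in the second branch $X_e\cap X_f$ is uncontrolled. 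The paper resolves this by applying Lemma~\ref{technical} to all three pairs from $T$, with the separations oriented so that the three ``uncrossed'' pairings are $(A_e,A_f)$, $(B_e,B_g)$, $(B_f,A_g)$, and then splitting on whether at least two of the three pairings land in the singleton branch. Only then is each of $A_e-f$ and $B_e-g$ expressed as a union of two controlled cells, giving $r(A_e),r(B_e)\le 4$ and hence $r(M)\le 6$. Your remark that you might ``bring in $(X_g,Y_g)$ if needed'' points in the right direction, but the three-way uncrossing and the case split on the number of singleton branches is the essential missing content, not an optional tightening.

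A second, smaller but real gap is in the counting. In the case where two cells of $A_e$ and two cells of $B_e$ each have rank at most $2$, the no-$U_{2,4}$ hypothesis only gives $|A_e|\le 6$ and $|B_e|\le 6$, hence $|E(M)|\le 13$; getting down to $11$ requires showing $|A_e|=6$ is impossible, which the paper does by observing that $A_e$ would then be a disjoint union of two triangles and then producing a cocircuit realizing outcome $(1)$ (via the set $(B_f\cup f)-\cl(A_f)$), a contradiction. Your arithmetic $2+3+3+3=11$ does not correspond to a valid accounting of the four cells in either branch, and the extremal analysis for the ``two disjoint triangles'' clause depends on this refined count. Finally, a minor omission: you assert that each deletion has an \emph{exact vertical $3$-separation}, but failure of vertical $4$-connectivity could a priori come from a vertical $1$- or $2$-separation; the paper disposes of this with a short separate argument (Claim~\ref{small sep} in its proof) showing that case forces $r(M)=3$ and outcome $(2)$.
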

\begin{proof}
Suppose that (1) does not hold for $M$.
If $M$ is uniform, then $r(M) = 2$ and outcome (2) holds, so we may assume that $M$ is non-uniform.

\begin{claim} \label{small sep}
If $M \del e$ has an exact vertical $j$-separation $(X,Y)$ with $j \in \{1,2\}$, then $(2)$ holds.
\end{claim}
It is easy to show that if $j = 1$, then $M \cong U_{2,3}$.
Suppose $j = 2$.
Then $r(X) \le 2$, otherwise $(X, Y \cup e)$ is a vertical $3$-separation of $M$, a contradiction.
Similarly, $r(Y) \le 2$.
Since $\lambda_{M \del e}(X) = 1$, this implies that $r(M) = 3$.
Also, if $M$ has no $U_{2,4}$-restrictions, then $|E(M)| \le 7$.
Thus \ref{small sep} holds.

Let $(A_e, B_e)$, $(A_f, B_f)$, and $(A_g, B_g)$ be vertical $3$-separations of $M \del e$, $M\del f$, and $M \del g$, respectively, so that $f \in A_e \cap A_g$ and $e \in A_f$.
Then $g \in B_e \cap B_f$ and $e \in B_g$.
By \ref{small sep}, we may assume that each of the designated vertical $3$-separations is exact, or else (2) holds.
We apply Lemma \ref{technical} with
\begin{itemize}
\item $(X_a, Y_a) = (A_e, B_e)$ and $(X_b, Y_b) = (A_f, B_f)$,

\item $(X_a, Y_a) = (B_e, A_e)$ and $(X_b, Y_b) = (B_g, A_g)$, and

\item $(X_a, Y_a) = (B_f, A_f)$ and $(X_b, Y_b) = (A_g, B_g)$.
\end{itemize}
In each case, outcome $(ii)$ of Lemma \ref{technical} holds.
First, suppose that at least two of $|A_e \cap A_f|$, $|B_e \cap B_g|$, and $|B_f \cap A_g|$ are equal to one.
Up to relabeling $e$, $f$, and $g$, we may assume that $|A_e \cap A_f| = 1$ and $|B_e \cap B_g| = 1$.
Note that $r(A_e \cap B_f) = 2$ and $r(B_e \cap A_g) = 2$.
Then $A_e - f$ and $B_e - g$ are each a union of a rank-$2$ set and a rank-$1$ set, so $r(A_e) \le 4$ and $r(B_e) \le 4$.
Since $\lambda_{M \del e}(A_e) = 2$, this implies that $r(M) \le 6$.
Also, if $M$ has no $U_{2,4}$-restrictions, then $|A_e| \le 5$ and $|B_e| \le 5$, so $|E(M)| \le 11$.
If $|E(M)| = 11$, then $A_e$ and $B_e$ each contain a triangle, so (2) holds.

Second, suppose that fewer than two of $|A_e \cap A_f|$, $|B_e \cap B_g|$, and $|B_f \cap B_g|$ are equal to one.
Then, by Lemma \ref{technical}, at least two of $r(B_e \cap B_f)$, $r(A_e \cap A_g)$, $r(A_f \cap B_g)$ are at most two.
Up to relabeling $e$, $f$, and $g$, we may assume that $r(B_e \cap B_f) \le 2$ and $r(A_e \cap A_g) \le 2$.
But then $A_e$ and $B_e$ are each the union of two sets of rank at most two, so $r(A_e) \le 4$ and $r(B_e) \le 4$. 
Since $\lambda_{M \del e}(A_e) = 2$, this implies that $r(M) \le 6$.

Now assume that $M$ has no $U_{2,4}$-restrictions.
Then $|A_e - f| \le 5$ and $|B_e - g| \le 5$, and so $|E(M)| \le |A_e - f| + |B_e - g| + |T| \le 13$.
We show that $|E(M)| \le 11$.
Suppose that $|A_e| = 6$.
Since $r(A_e \cap B_g) = 2$ and $r(A_e \cap A_g) \le 2$, each of these sets is a triangle.
So, $A_e$ is the disjoint union of triangles $T_1$ and $T_2$, where $f \in T_2$.
Each of $A_f$ and $B_f$ contains an element of $T_2$, or else $A_f$ or $B_f$ spans $f$.
As $|A_e \cap B_f| \ge 2$, it follows that $|T_1 \cap B_f| \ge 1$.
If $|T_1 \cap B_f| \ge 2$, then $M$ has a $U_{2,4}$-restriction consisting of $T_1$ and the element in $T_2 \cap B_f$, since $r(A_e \cap B_f) = 2$.
Thus, $|T_1 \cap B_f| = 1$.
Let $s \in T_1 \cap B_f$.
Then $A_f$ spans $s$, and so (1) holds with $C^* = (B_f \cup f) - \cl(A_f)$, a contradiction.
Thus, $|A_e| \le 5$, and a similar argument shows that $|B_e| \le 5$, so $|E(M)| \le 11$.
It is clear from the argument that if $|A_e| = |B_e| = 5$, then each contains a triangle, and so (2) holds.
\end{proof}


\begin{proof}[Proof of Theorem~\ref{minimal triangle}]
As $M$ is minimally vertically $4$-connected, Lemma \ref{4-pt line} implies that $M$ has no $U_{2,4}$-restriction.
Thus, in Theorem~\ref{main extra}, we see that $k \le 3$ in outcome $(1)$, while $|E(M)| \le 11$ in outcome $(2)$.
The theorem follows.
\end{proof}

Theorem \ref{minimal} directly implies Theorem~\ref{binary}, since a circuit and a cocircuit in a binary matroid meet in an even number of elements.
Thus we need only prove Theorem~\ref{minimal}.
To do this, we must investigate matroids with at most $11$ elements.

\begin{proof}[Proof of Theorem~\ref{minimal}]
Let $M$ be a minimally vertically $4$-connected matroid with at least six elements.
Suppose that $M$ has no $4$-cocircuits and no $5$-cocircuits containing a triangle.
By Theorem~\ref{minimal triangle}, we must have that $|E(M)| \le 11$.
Theorem~\ref{RWZ} implies that $M$ has a triangle $T$.

\begin{claim} \label{exact}
$M$ has no element $a$ for which $M \del a$ has an exact vertical $j$-separation for $j \in \{1,2\}$.
\end{claim}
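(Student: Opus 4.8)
The plan is to argue by contradiction: suppose $a \in E(M)$ is such that $M \del a$ has an exact vertical $j$-separation $(X,Y)$ with $j \in \{1,2\}$. The case $j=1$ is easy, since a vertically $4$-connected matroid is connected once it has more than one element, and indeed a short argument (as in the proof of Claim \ref{small sep}) forces $M$ to be very small, contradicting $|E(M)| \ge 6$. So I would focus on $j = 2$. Here $(X,Y)$ is an exact vertical $2$-separation of $M \del a$, so $\lambda_{M \del a}(X) = 1$ and $\min\{r(X), r(Y)\} \ge 2$. Since $M$ itself is vertically $4$-connected, neither $(X \cup a, Y)$ nor $(X, Y \cup a)$ can be a vertical $3$-separation of $M$; as $\lambda_{M}(Z) \le \lambda_{M \del a}(Z) + 1$ for $Z \in \{X, Y\}$, this forces $r_M(X) \le 2$ and $r_M(Y) \le 2$. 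Combined with $\lambda_{M \del a}(X) = 1$ and the rank bounds, we get $r(M) = 3$.

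Next I would pin down the structure. With $r(M) = 3$ and $M$ simple (a vertically $4$-connected matroid with at least two elements is simple), $M$ is a rank-$3$ simple matroid, hence a restriction of a projective plane. Every point of $M$ lies on at most one long line (a line with $\ge 3$ points) other than... actually the key point is: $M$ has no $4$-cocircuits, and in a rank-$3$ matroid a cocircuit is the complement of a hyperplane, i.e.\ of a line. So $M$ having no $4$-cocircuit means every line of $M$ has at most $|E(M)| - 5$ points, i.e.\ $|E(M)| - (\text{size of any line}) \ge 5$ unless that size is $0$ or the cocircuit is smaller. But $M$ is also non-uniform (it has a triangle, so some line has exactly $3$ points) and vertically $4$-connected, so by the preliminaries it has no triad, meaning every line has at most $|E(M)| - 4$ points. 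I would use Lemma \ref{4-pt line}: since $M$ is minimally vertically $4$-connected, it has no $U_{2,4}$-restriction, so \emph{every} line of $M$ has at most $3$ points. A rank-$3$ simple matroid on $n$ elements in which every line has at most $3$ points is exactly (a restriction of) a configuration where each pair of points lies in a $3$-point or $2$-point line; counting pairs gives strong constraints, and in particular the complement of a $3$-point line is an $(n-3)$-cocircuit, so $n - 3 \notin \{4\}$ forces...

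The cleanest route: since $T$ is a triangle, $E(M) - T$ is a cocircuit (complement of the line $\cl(T)$, which equals $T$ as there is no $U_{2,4}$). This cocircuit has size $|E(M)| - 3$. Since $M$ has no $4$-cocircuit, $|E(M)| - 3 \ne 4$, so $|E(M)| \ne 7$; and since $M$ has no $5$-cocircuit containing a triangle, if $E(M) - T$ contains a triangle then $|E(M)| \ne 8$. More importantly, every other line $L$ of $M$ also has $|L| \le 3$, so $E(M) - L$ is a cocircuit of size $|E(M)|$ or $|E(M)| - 1$ or $|E(M)| - 2$ or $|E(M)| - 3$; ruling out size $4$ gives $|E(M)| \le 6$, and then I can finish by hand (a rank-$3$ vertically $4$-connected matroid on exactly $6$ elements with all lines of size $\le 3$ — this is forced to be a specific small matroid, and a direct check shows it either has a $4$-cocircuit or fails to be vertically $4$-connected).

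The main obstacle I expect is the bookkeeping in this last step: showing that a rank-$3$ simple matroid with no $U_{2,4}$-restriction and no $4$-cocircuit simply cannot exist on $6 \le n \le 11$ elements while remaining vertically $4$-connected. The right tool is a counting argument on the lines (each point lies on $n-1$ over $2$... rather, the $3$-point lines through a fixed point partition the other $n-1$ points into pairs, forcing $n$ odd, etc.) together with the cocircuit-size restriction, and I would organize it as: (a) reduce to $r(M) = 3$ as above; (b) observe all lines have $\le 3$ points via Lemma \ref{4-pt line}; (c) note complements of lines are cocircuits and none has size $4$; (d) derive a contradiction with $|E(M)| \le 11$ by the parity/counting constraints, handling the few remaining small cases individually. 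This yields Claim \ref{exact}.
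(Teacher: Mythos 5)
Your opening reductions match the paper's: for $j=2$ you correctly force $r(X)\le 2$ and $r(Y)\le 2$, hence $r(M)=3$. But your finish has a genuine gap. The step ``ruling out size $4$ gives $|E(M)|\le 6$'' is backwards: since every line has at most $3$ points, complements of lines are cocircuits of size $|E(M)|-2$ or $|E(M)|-3$, so forbidding a $4$-cocircuit excludes only $|E(M)|\in\{6,7\}$ and says nothing about $|E(M)|\ge 8$. Worse, your fallback plan (d) --- showing that no rank-$3$ vertically $4$-connected simple matroid with all lines of size at most $3$ and no $4$-cocircuit exists on $6\le n\le 11$ elements --- is false as stated: $\AG(2,3)$ is such a matroid on $9$ elements (every cocircuit is the complement of a $3$-point line and so has size $6$). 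That example is excluded in this proof only because it is not \emph{minimally} vertically $4$-connected, so a counting argument on lines alone cannot close the cases $n\ge 8$.

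The repair is already in your hands and is exactly what the paper does. You have $E(M)-a=X\cup Y$ with $r(X)=r(Y)=2$, and minimality together with Lemma \ref{4-pt line} forbids $U_{2,4}$-restrictions, so $|X|\le 3$ and $|Y|\le 3$; hence $|E(M)|\le 7$ at once, with no global line-counting. The second part of Lemma \ref{4-pt line} forbids a $U_{2,4}\oplus_2 U_{2,4}$-restriction, i.e.\ two disjoint triangles in a common plane, so $X$ and $Y$ cannot both be triangles and $|E(M)|=6$. The paper then finishes by noting that $M$ is $3$-connected and invoking the catalogue of six-element rank-$3$ $3$-connected matroids ($M(K_4)$, $\mathcal W^3$, $Q_6$, $P_6$, $U_{3,6}$), each of which has a $4$-cocircuit; your proposal leaves this terminal step as an unspecified ``direct check.''
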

Let $(X,Y)$ be an exact vertical $j$-separation of $M \del a$ with $j \in \{1,2\}$.
It is easy to show that if $j = 1$, then $M \cong U_{2,3}$, a contradiction.
Thus $j = 2$.
If $r(X) \ge 3$, then $(X, Y \cup a)$ is a vertical $2$- or $3$-separation of $M$, a contradiction.
So $r(X) = 2$, and, similarly, $r(Y) = 2$.
By Lemma~\ref{4-pt line}, this implies that $|E(M)| \le 7$.
Since $\lambda_{M \del a}(X) = 1$, it also implies that $r(M) = 3$.
However, by Lemma~\ref{4-pt line}, $M$ has no disjoint triangles in a common plane, so $|E(M)| = 6$.
Since $M$ is a $6$-element rank-$3$ matroid that is $3$-connected, it is isomorphic to $M(K_4)$, $\mathcal W^3$, $Q_6$, $P_6$, or $U_{3,6}$ \cite[Corollary 12.2.19]{Oxley}, so $M$ has a $4$-cocircuit, a contradiction.
Thus \ref{exact} holds.

A consequence of \ref{exact} is that $M$ has no cocircuits with fewer than four elements.
For all $a$ in $E(M)$, the deletion $M \del a$ has an exact $3$-separation $(A_a, B_a)$.
Since each of $A_a$ and $B_a$ must contain a cocircuit of $M \del a$ but $M$ has no cocircuits of size less than five, $|E(M)| \ge 9$.
As $M \del a$ has no exact $2$-separations, we may assume that $T \subseteq B_a$.
Then $|E(M)| \ne 9$, otherwise $|A_a| = 4 = |B_a|$ and $B_a \cup a$ is a $5$-cocircuit containing a triangle, a contradiction.


Next suppose that $|E(M)| = 10$.
Then $|B_a| = 5$ and $|A_a| = 4$.
Also, $A_a$ does not contain a triangle, or else $A_a \cup a$ is a $5$-cocircuit of $M$ that contains a triangle.
Then $M|A_a \cong U_{3,4}$ otherwise $M|A_a$ has a coloop $x$ and $(A_a - x, B_a \cup x)$ is a vertical $3$-separation of $M \del a$, so $(A_a - x) \cup a$ is $4$-cocircuit of $M$, a contradiction.
Similarly, each element $x \in B_a - T$ is in $\cl(B_a - x)$, or else $(B_a - x) \cup a$ is a $5$-cocircuit of $M$ that contains $T$.
This implies that $r(B_a) = 3$.
As $r(A_a) = 3$, it follows that $r(M) = 4$.
So, for each $a \in E(M) - T$, there is a pair $P_a$ of elements so that $r(T \cup P_a) = 3$.
Since $M|A_a \cong U_{3,4}$ and $a \notin \cl(B_a)$, it follows that $P_a \ne P_b$ if $a \ne b$ otherwise $A_a - b$ spans $A_a \cup a$, a contradiction.
Let $\cP$ be the set of pairs $P$ for which $r(P \cup T) = 3$.
Then $|\cP| \ge |E(M) - T| = 7$.
However, the sets in $\cP$ are pairwise disjoint, or else $M$ has a $6$-element plane and thus has a $4$-cocircuit.
Since $|\cP| \ge 7$, this implies that $|E(M)| \ge 17$, a contradiction.


Finally, suppose that $|E(M)| = 11$.
By Theorem \ref{main extra}, $M$ has disjoint triangles $T_1$ and $T_2$.
Let $X = E(M) - (T_1 \cup T_2)$.
For each $a \in X$, let $(A_a, B_a)$ be a vertical $3$-separation of $M \del a$ so that $T_1 \subseteq A_a$,  while $T_2$ is contained in $A_a$ or $B_a$.
If $T_2 \subseteq A_a$ for some $a \in X$, then, by Lemma \ref{4-pt line}, $r(A_a) \ge 4$, so $r(M) \ge 5$.
If $T_2 \subseteq B_a$ for some $a \in X$, then $|A_a| = |B_a| = 5$.
Then $M|A_a$ and $M|B_a$ each have no coloops, or else $M$ has a $5$-cocircuit containing a triangle.
This implies that $r(M) = 4$.
Thus, either $T_2 \subseteq A_a$ for all $a \in X$, or $T_2 \subseteq B_a$ for all $a \in X$.
If the former holds, then $M|B_a \cong U_{3,4}$ for each $a \in X$.
It follows that $r(X) = 3$, so $r(B_a \cup a) = r(B_a)$, a contradiction.
We deduce that $T_2 \subseteq B_a$ for all $a \in X$.
This implies that, for each $a \in X$, there is a pair $P_a$ for which $r(T_2 \cup P_a) = 3$.
If $P_a = P_b$ for distinct $a,b \in X$, then $T_2 \cup P_a$ and its complement in $M$ each have rank $3$, so $M$ is not vertically $4$-connected, a contradiction.
Let $\cP$ be the set of pairs $P$ for which $r(T_2 \cup P) = 3$.
Then $|\cP| \ge |X| = 5$.
However, the sets in $\cP$ are pairwise disjoint, or else $T_2$ is contained in a $6$-element plane that is disjoint from $T_1$, so $T_1$ is contained in a $5$-cocircuit of $M$.
Since $|\cP| \ge 5$, this implies that $|E(M)| \ge 16$, a contradiction.
\end{proof}

\end{section}


\begin{section}{Open Problems} \label{open}
Our results lead to some natural open problems in several different directions.
First, we conjecture that Theorem~\ref{minimal} can be strengthened so that the cocircuit contains a specific element of $T$, as in Theorem~\ref{TTL}.

\begin{conjecture} 
Let $M$ be a minimally vertically $4$-connected matroid with a triangle $T = \{e,f,g\}$. Then
\begin{enumerate}[$(1)$]
\item $M$ has a $4$-cocircuit that contains $e$ and exactly one of $f$ and $g$, or

\item $M$ has a $5$-cocircuit that contains a triangle and $e$, and exactly one of $f$ and $g$, or 

\item  $|E(M)| \le 11$.
\end{enumerate}
\end{conjecture}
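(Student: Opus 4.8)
The plan is to retrace the proof of Theorem~\ref{main extra}, but to organize the applications of Lemma~\ref{technical} so that the distinguished element $e$ lies in every cocircuit that is produced. Since $M$ is minimally vertically $4$-connected, Lemma~\ref{4-pt line} shows that $M$ has no $U_{2,4}$-restriction, so a cocircuit arising from outcome $(i)$ of Lemma~\ref{technical} is isomorphic to $U_{2,2}\oplus U_{2,2}$ or $U_{2,3}\oplus U_{2,2}$; these are, respectively, a $4$-cocircuit and a $5$-cocircuit that contains a triangle. Exactly as at the beginning of the proof of Theorem~\ref{main extra}, if one of $M\del e$, $M\del f$, $M\del g$ has an exact vertical $1$- or $2$-separation, then $|E(M)|\le 7$ and outcome $(3)$ holds; so I may assume that each of these three deletions has an exact vertical $3$-separation.

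The new ingredient is the following observation: in \emph{any} exact vertical $3$-separation $(X_e,Y_e)$ of $M\del e$, the two other elements of $T$ lie on opposite sides. Indeed, if $\{f,g\}\subseteq X_e$, then $X_e$ spans $e$, so $(X_e\cup e,Y_e)$ is a vertical $3$-separation of $M$, a contradiction; the same holds for $M\del f$ and $M\del g$. Fix a vertical $3$-separation $(X_e,Y_e)$ of $M\del e$ with $f\in X_e$ and $g\in Y_e$. I would then apply Lemma~\ref{technical} twice, each time with the element in the role of ``$b$'' equal to $e$: once with ``$a$''$=f$, using $(X_e,Y_e)$ together with a vertical $3$-separation $(X_f,Y_f)$ of $M\del f$ with $e\in X_f$; and once with ``$a$''$=g$, using $(Y_e,X_e)$ (legitimate since $g\in Y_e$) together with a vertical $3$-separation $(X_g,Y_g)$ of $M\del g$ with $e\in X_g$. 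One checks that in outcome $(i)$ of Lemma~\ref{technical} the cocircuit always contains the element in the role of ``$b$'', namely $e$; by orthogonality it then meets $T$ in $e$ together with exactly one of $f$ and $g$, which is what we want. So it remains to reach a contradiction, or conclude $|E(M)|\le 11$, when outcome $(ii)$ of Lemma~\ref{technical} holds in both applications.

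In that case each application writes one of $X_e,Y_e$ as the union of a rank-$2$ set and either a single element or (in the other branch of outcome $(ii)$) a set of rank at most two. When the branches that arise are favourable, one gets $r(X_e)\le 4$ and $r(Y_e)\le 4$ simultaneously, whence $r(M)\le 6$ since $\lambda_{M\del e}(X_e)=2$; then, because $M$ has no $U_{2,4}$-restriction, a case analysis like the one closing the proof of Theorem~\ref{main extra}, now also exploiting the separations of $M\del f$ and $M\del g$, should give $|E(M)|\le 11$.

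The main obstacle is that the two applications above are both ``anchored'' at $e$, so, unlike the three symmetric applications in the proof of Theorem~\ref{main extra}, in some combinations of the branches of outcome $(ii)$ they bound the rank of the \emph{same} side of $(X_e,Y_e)$ twice and say nothing about $r(M)$. Pushing the argument through seems to require either a sharpening of Lemma~\ref{technical} that controls all four of the intersections $X_a\cap X_b$, $X_a\cap Y_b$, $X_b\cap Y_a$, $Y_a\cap Y_b$ (the present statement pins down only two of them), or a genuinely different argument in the spirit of Tutte's proof of Theorem~\ref{TTL}, tracking $e$ through the relation $e\in\cl(\{f,g\})$. Absent such an improvement it is conceivable that the bound $11$ in outcome $(3)$ must be enlarged, or that the statement fails for a sporadic larger matroid --- which would be consistent with it being left as a conjecture.
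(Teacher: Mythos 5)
There is no proof to compare against: this statement appears in the paper only as an open conjecture in the final section, and the authors offer no argument for it. So the question is whether your proposal closes the gap, and it does not --- as you yourself concede in your last paragraph. Your preliminary observations are sound: $f$ and $g$ must lie on opposite sides of any exact vertical $3$-separation of $M\del e$, the absence of $U_{2,4}$-restrictions (via Lemma~\ref{4-pt line}) turns outcome $(i)$ of Lemma~\ref{technical} into a $4$-cocircuit or a $5$-cocircuit containing a triangle, and one can check that every cocircuit produced in the proof of Lemma~\ref{technical} contains the element playing the role of $b$. But the decisive step fails exactly where you say it does. With both applications anchored at $e$, outcome $(ii)$ of Lemma~\ref{technical} yields, for the $f$-application, either $r(X_e)\le 3$ or $r(Y_e)\le 4$, and for the $g$-application, either $r(Y_e)\le 3$ or $r(X_e)\le 4$. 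In the two mixed branches both bounds land on the same side of $(X_e,Y_e)$, and nothing controls the other side, so you cannot conclude $r(M)\le 6$ or bound $|E(M)|$. The three-way symmetric uncrossing in the proof of Theorem~\ref{main extra} is precisely what avoids this, and it is also what loses control of which element of $T$ the resulting cocircuit contains; your proposal does not supply a replacement for that step.

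To be clear about what would be needed: either a strengthening of Lemma~\ref{technical} that simultaneously controls all four intersections $X_a\cap X_b$, $X_a\cap Y_b$, $X_b\cap Y_a$, $Y_a\cap Y_b$ (so that the mixed branches still bound both $r(X_e)$ and $r(Y_e)$), or an argument in the style of Tutte's proof of Theorem~\ref{TTL} that exploits $e\in\cl(\{f,g\})$ directly. Absent one of these, the proposal establishes only the weaker, already-proved Theorem~\ref{minimal triangle}, and the conjecture remains open.
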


This would be helpful for trying to find a tight lower bound on the number of $4$-cocircuits and $5$-cocircuits containing a triangle in a minimally vertically $4$-connected matroid.

Second, we conjecture that Theorem \ref{binary} extends to vertical $k$-connectivity with $k \ge 5$ due to restrictions on the interaction between circuits and cocircuits in binary matroids.

\begin{conjecture}
For each integer $k \ge 5$, every minimally vertically $k$-connected binary matroid with at least $2k + 2$ elements has a $k$-cocircuit.
\end{conjecture}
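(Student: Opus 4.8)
The plan is to follow the template of the $k=4$ argument in this paper and push it up to general $k$, exploiting the rigidity of binary matroids at the end. First I would split into two regimes according to the girth of $M$. If $M$ has no circuit of size less than $k$, then, exactly as in the remark that a non-uniform matroid is $k$-connected if and only if it is vertically $k$-connected with no short circuits, $M$ is in fact minimally $k$-connected; so it would suffice to know the relevant case of Conjecture~\ref{k-conn}. Since the Reid--Wu--Zhou counterexamples to Conjecture~\ref{k-conn} have exactly $2k+1$ elements, the hypothesis $|E(M)| \ge 2k+2$ is consistent with the conjecture holding here, and one should first check whether those counterexamples are binary: if not, the binary case of Conjecture~\ref{k-conn} may be provable directly, since binary matroids have no $U_{2,4}$-minor and hence far less freedom, and that would dispose of the high-girth case.

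The substantive work is the case where $M$ has a circuit $C$ with $3 \le |C| \le k-1$. Here I would prove a generalization of Theorem~\ref{main extra} and its engine, Lemma~\ref{technical}: if $M$ is vertically $k$-connected with such a circuit $C$ and $M \del e$ fails to be vertically $k$-connected for every $e \in C$, then either $M$ has a cocircuit $C^*$ with $2 \le |C^* \cap C| \le |C|-1$ such that $M|C^*$ is a direct sum of rank-$\le 2$ uniform matroids (the analogue of $U_{2,k}\oplus U_{2,2}$), or $r(M)$ and $|E(M)|$ are bounded by an explicit function of $k$. The proof would mimic the present one: choose exact vertical $(k-1)$-separations $(X_e,Y_e)$ of $M\del e$ for $e \in C$, uncross them using submodularity of $\lambda_{M \del C'}$ over subsets $C'$ of $C$, and use vertical $k$-connectivity of $M$ together with the $U_{2,4}$-style rigidity of Lemma~\ref{4-pt line} to force the pairwise intersections of the $X_e$'s and $Y_e$'s to have small rank, which pins down $C^*$.

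The binary finish then reduces to two facts. Every line of a binary matroid has at most three points, so each uniform summand of $M|C^*$ has at most three elements; and a circuit and a cocircuit of a binary matroid meet in an even number of elements, which simultaneously kills the ``long-line'' cocircuits and rigidly constrains how $C$ sits inside $C^*$. I would argue that, once $|E(M)|$ exceeds the bound coming from the structure theorem, these parity and line-length constraints force $|C^*| = k$; the residual small matroids form a finite list for each fixed $k$ and can be checked directly, just as the paper handles $|E(M)| \le 11$ when $k=4$, possibly with computer assistance.

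The main obstacle I anticipate is the uncrossing step. Triangles are special because two triangles of a vertically $k$-connected matroid interact rigidly, whereas circuits of sizes $4,\dots,k-1$ can overlap in many ways and there are several of them, so a direct generalization of the four-claim case analysis in Lemma~\ref{technical} will explode combinatorially. I expect one needs an organizing device not present in the $k=4$ proof --- for instance an induction on $k$ or on $|C|$, or a reformulation in terms of local connectivity $\sqcap$ that makes the uncrossing uniform --- before the bookkeeping becomes manageable. A secondary obstacle is that the high-girth reduction leans on an unproven case of Conjecture~\ref{k-conn}, so if its binary case turns out to be hard, one may instead attack the whole statement through the dual, where $M$ becomes minimally cyclically $k$-connected and a $k$-cocircuit becomes a circuit of size at most $k$, and prove the extremal bound ``a minimally cyclically $k$-connected binary matroid on at least $2k+2$ elements has girth at most $k$'' by a counting argument in the binary cocycle space.
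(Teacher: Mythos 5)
The statement you set out to prove is one of the paper's \emph{open conjectures} (it appears in the Open Problems section), so there is no proof in the paper to compare against; the only available benchmark is the $k=4$ machinery your plan would need to generalize. Judged as a proof, your proposal has at least three genuine gaps, each of which you partly acknowledge. First, the high-girth case is reduced to the binary, $k \ge 5$ case of Conjecture~\ref{k-conn}, which is open; observing that the Reid--Wu--Zhou counterexamples have $2k+1$ elements, so that the hypothesis $|E(M)| \ge 2k+2$ is ``consistent'' with the conjecture, establishes nothing. Second, the engine of the whole approach --- an analogue of Lemma~\ref{technical} and Theorem~\ref{main extra} for a circuit $C$ with $3 \le |C| \le k-1$, obtained by uncrossing exact vertical $(k-1)$-separations of the deletions $M \del e$ for $e \in C$ --- is not carried out, and you yourself flag that the case analysis does not obviously generalize and that some new organizing device is needed. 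All of the content lives there; without it there is no cocircuit $C^*$ to analyze.

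Third, even granting such a structure theorem, the final step is asserted rather than argued. In the paper's $k=4$ deduction the only bad outcome is a $5$-cocircuit containing a triangle, and a triangle meets such a cocircuit in three elements, contradicting the even-intersection property of binary matroids --- a single, very specific configuration is excluded. For general $k$, a cocircuit $C^*$ whose restriction is a direct sum of lines with at most three points can have any size between $k$ and roughly $3k/2$, and the parity condition constrains only the intersections of $C^*$ with circuits, not $|C^*|$ itself; you would need the (unproved) structure theorem to bound the number and sizes of the summands before parity could pin $|C^*|$ down to $k$. Together with the deferred finite check of small matroids, this makes the proposal a plausible research program --- essentially the one the paper's authors implicitly have in mind when they say the conjecture should follow from ``restrictions on the interaction between circuits and cocircuits in binary matroids'' --- but not a proof.
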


We close with the following extension of Conjecture \ref{construction}, which would show that Problem \ref{k} has a negative answer when $k \ge 4$.

\begin{conjecture}
For each integer $k \ge 4$, there is an infinite family of minimally vertically $k$-connected matroids with no $k$-cocircuits.
\end{conjecture}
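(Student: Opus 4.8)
The plan is to produce the family by a recursive gluing construction. One would search for a single ``gadget'' matroid $G$: a vertically $4$-connected matroid containing two disjoint triangles $T_1$ and $T_2$ such that $G$ has no $4$-cocircuit, and such that for every $e \in E(G) \setminus (T_1 \cup T_2)$ the matroid $G \setminus e$ has a vertical $3$-separation (so that each ``interior'' element is already certified to be essential inside the gadget). One then amalgamates many copies of $G$ along triangles --- identifying the triangle $T_2$ of each copy with the triangle $T_1$ of the next, via generalized parallel connections (or a similar $3$-sum-type operation) --- and closes the chain into a cycle, obtaining a matroid $M_n$ with roughly $n\,(|E(G)| - 3)$ elements. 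Note that Theorems~\ref{main extra} and~\ref{minimal triangle} already predict the shape of such an $M_n$: if it is minimally vertically $4$-connected with no $4$-cocircuit, then every triangle must lie in a $5$-cocircuit isomorphic to $U_{2,3} \oplus U_{2,2}$. So the gadget should be built with exactly such a $5$-cocircuit through each of $T_1$ and $T_2$, and the whole point of the construction is to make these the only small cocircuits.

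The verification would split into three parts. First, \emph{vertical $4$-connectivity} of $M_n$: by submodularity of the connectivity function, any vertical $3$-separation $(X,Y)$ of $M_n$ can be uncrossed against the amalgamating triangles, exactly as in Lemma~\ref{technical}, and pushed down to a vertical $3$-separation of a single gadget copy, which does not exist by the choice of $G$. Second, \emph{no $4$-cocircuit}: a cocircuit of an amalgam along a triangle is either a cocircuit of one part that misses that triangle, or is assembled from cocircuits meeting it; taking $G$ with no $4$-cocircuit and arranging that a cocircuit meeting a gluing triangle necessarily picks up at least five elements (which is where the $U_{2,3}\oplus U_{2,2}$ structure is used) rules these out. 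Third, \emph{minimality}: for every $e \in E(M_n)$ one must exhibit a vertical $3$-separation of $M_n \setminus e$. For $e$ in the interior of a gadget copy this should follow from the defining property of $G$; but when $e$ lies in a gluing triangle $T$, that triangle is shared by two copies and is therefore doubly robust, so deleting $e$ need not obviously separate anything. This last point is the real content, and one must engineer the global cyclic structure so that removing any element of any gluing triangle ``unzips'' the cycle enough to reveal a separation.

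I expect the minimality step to be the decisive obstacle --- presumably it is where the earlier (erroneous) argument failed --- because the tension is intrinsic. To avoid $4$-cocircuits, $M_n$ must contain many robust triangles, each forced by Theorem~\ref{minimal triangle} into a $5$-cocircuit $U_{2,3}\oplus U_{2,2}$; but robust triangles are precisely the configurations whose individual elements are hardest to certify as essential for vertical $4$-connectivity. A construction that works will likely need each gluing triangle to interact with the rest of its gadget in a deliberately asymmetric way, so that each of its three elements is ``the'' element whose deletion exposes a particular $3$-separation somewhere in the cycle; finding a gadget (almost certainly non-binary, and quite possibly non-representable) that simultaneously achieves this and the other required properties is the crux. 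Once the case $k = 4$ is settled, the extension to $k \ge 5$ should follow either from an analogous construction of one higher rank, or from showing that a suitable single-element coextension of a minimally vertically $k$-connected matroid with no $k$-cocircuit is again minimally vertically $(k+1)$-connected with no $(k+1)$-cocircuit --- the work there being to check that the coextension preserves both the absence of small cocircuits and the minimality.
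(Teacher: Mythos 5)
This statement is an open conjecture in the paper, not a theorem: the authors explicitly state that they claimed a construction in an earlier version, that the referee found an error, and that they could not repair it. So there is no proof in the paper to compare against, and your proposal does not close the gap either --- it is a research plan rather than a proof. No gadget matroid $G$ is exhibited; none of the three required verifications (vertical $4$-connectivity of the amalgam, absence of $4$-cocircuits, and minimality) is carried out; and you yourself flag the minimality step as ``the decisive obstacle'' without resolving it. A proof of an existence statement of this kind must actually produce the infinite family, and every load-bearing object in your argument is introduced by ``one would search for'' or ``one must engineer.''

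That said, your diagnosis of where the difficulty lies is sound and consistent with the paper's positive results. Theorem~\ref{minimal triangle} does force every triangle of a minimally vertically $4$-connected matroid with no $4$-cocircuit (and more than $11$ elements) into a $5$-cocircuit of the form $U_{2,3} \oplus U_{2,2}$, so any construction must realize exactly that local structure, and the tension you identify --- gluing triangles must be robust enough to kill $4$-cocircuits yet each of their elements must still be deletion-critical for vertical $4$-connectivity --- is precisely the kind of tension that plausibly sank the authors' original attempt. Two specific warnings if you pursue this: (i) generalized parallel connection across a triangle requires the triangle to be a modular flat in one of the parts, which is a nontrivial constraint on your gadget and interacts badly with the requirement that the gadget have no $4$-cocircuits; and (ii) the reduction from $k=4$ to $k \ge 5$ via single-element coextension is not routine --- coextensions readily create new small cocircuits, and vertical $(k+1)$-connectivity is a strictly stronger hypothesis than vertical $k$-connectivity plus one extra rank, so that step would need its own argument.
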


\end{section}

\begin{section}*{Acknowledgements}
We thank an anonymous referee for pointing out an error in our construction of an infinite family of minimally vertically $4$-connected matroids with no $4$-cocircuits in a previous version of this paper.
\end{section}

\end{document}